\renewcommand{\algorithmiccomment}[1]{\bgroup\hfill~\texttt{/* #1 */}\egroup}
\newlist{abbrv}{itemize}{1}
\setlist[abbrv,1]{label=,labelwidth=1in,align=parleft,itemsep=0.1\baselineskip,leftmargin=!}
\newcommand{\name}[1]{\mbox{#1}\xspace}
\newcommand{\scip}{\name{SCIP}}
\newcommand{\antigone}{\name{ANTIGONE}}
\newcommand{\baron}{\name{BARON}}
\newcommand{\couenne}{\name{Couenne}}
\newcommand{\cplex}{\name{CPLEX}}
\newcommand{\cppad}{\name{CppAD}}
\newcommand{\ipopt}{\name{Ipopt}}
\newcommand{\mumps}{\name{Mumps}}
\newcommand{\globallib}{\name{GLOBALLib}}
\newcommand{\minlplib}{\name{MINLPLib}}
\newcommand{\minlplibtwo}{\name{MINLPLib2}}
\newcommand{\MINLP}{\name{MINLP}}
\newcommand{\MINLPs}{\name{MINLPs}}
\newcommand{\MIQCP}{\name{MIQCP}}
\newcommand{\MIQCPs}{\name{MIQCPs}}
\newcommand{\NLP}{\name{NLP}}
\newcommand{\LP}{\name{LP}}
\newcommand{\LPs}{\name{LPs}}
\newcommand{\OBBT}{\name{OBBT}}
\newcommand{\FBBT}{\name{FBBT}}
\newcommand{\RLT}{\name{RLT}}
\newcommand{\SDP}{\name{SDP}}
\newcommand{\SDPs}{\name{SDPs}}
\newcommand{\T}{\mathsf{T}}
\newcommand{\mvec}[2]{\begin{pmatrix}#1\\#2\end{pmatrix}}
\newcommand{\R}{\mathbb{R}}
\newcommand{\Z}{\mathbb{Z}}
\DeclareMathOperator{\sign}{sign}
\newcommand{\volume}[1]{\text{Vol}(#1)}
\newcommand{\dash}{---}
\newcommand{\tabledefline}[2]{\multicolumn{1}{l}{\rlap{#1\ \dash\ #2}}\\}
\definecolor{plum}{rgb}{0.8, 0.6, 0.8}
\definecolor{applegreen}{rgb}{0.55, 0.71, 0.0}
\definecolor{apricot}{rgb}{0.98, 0.81, 0.69}
\definecolor{amber}{rgb}{1.0, 0.49, 0.0}
\definecolor{americanrose}{rgb}{1.0, 0.01, 0.24}
\newcommandx{\change}[2][1=]{\todo[linecolor=americanrose,backgroundcolor=americanrose!25,bordercolor=americanrose,#1]{#2}\,}
\newcommandx{\improvement}[2][1=]{\todo[linecolor=amber,backgroundcolor=amber!25,bordercolor=amber,#1]{#2}\,}
\newcommandx{\unsure}[2][1=]{\todo[linecolor=apricot,backgroundcolor=apricot!25,bordercolor=apricot,#1]{#2}\,}
\newcommandx{\info}[2][1=]{\todo[linecolor=applegreen,backgroundcolor=applegreen!25,bordercolor=applegreen,#1]{#2}\,}
\newcommandx{\missing}[2][1=]{\todo[linecolor=blue,backgroundcolor=blue!25,bordercolor=blue,#1]{#2}\,}
\newcommand{\fa}{\text{ for all }}
\newcommand{\psd}{\succeq}
\DeclareMathOperator{\convexhull}{conv}
\DeclareMathOperator*{\argmax}{argmax}
\DeclareMathOperator*{\argmin}{argmin}
\newcommand{\NP}{$\mathcal{NP}$}
\newcommand{\consindex}{\mathcal{M}}
\newcommand{\varindex}{\mathcal{N}}
\newcommand{\intvarindex}{\mathcal{I}}
\newcommand{\nvars}{n}
\newcommand{\nconss}{m}
\newcommand{\lb}{\ell}
\newcommand{\ub}{u}
\newcommand{\lbi}{\lb_i}
\newcommand{\ubi}{\ub_i}
\newcommand{\lbj}{\lb_j}
\newcommand{\ubj}{\ub_j}
\newcommand{\ibox}{[\lb_i,\ub_i]}
\newcommand{\jbox}{[\lb_j,\ub_j]}
\newcommand{\ijbox}{[\lb_i,\ub_i] \times [\lb_j,\ub_j]}
\newcommand{\feasset}{\mathcal{F}}
\newcommand{\relaxset}{\mathcal{X}}
\newcommand{\projrelaxset}{\relaxset_{ij}}
\newcommand{\projfeasset}{\feasset_{ij}}
\newcommand{\incumbent}{\mathcal{U}}
\renewcommand{\xi}{x_i}
\newcommand{\xj}{x_j}
\newcommand{\xibar}{\bar x_i}
\newcommand{\xjbar}{\bar x_j}
\newcommand{\Xij}{X_{ij}}
\newcommand{\Pij}{P_{ij}}
\newcommand{\varbounds}[1]{[\lb_{#1},\ub_{#1}]}
\newcommand{\diaglp}[2]{\LP(#1,#2)}
\newcommand{\centerp}{C}
\newcommand{\gapfct}{GC}
\newcommand{\ninstances}{1682}
\newcommand{\expAffected}{\texttt{AFFECTED}}
\newcommand{\expRootgap}{\texttt{ROOTGAP}}
\newcommand{\expTree}{\texttt{TREE}}
\newcommand{\treeScip}{\texttt{SCIP}}
\newcommand{\treeScipSepa}{\texttt{SCIP+s}}
\newcommand{\treeScipSepaProp}{\texttt{SCIP+s+p}}
\newcommand{\revision}[1]{#1}
\newcommand{\revisionstart}{}
\newcommand{\revisionend}{} \newcommand{\affectedNNoBilins}{464}
\newcommand{\affectedNAffected}{564}
\newcommand{\affectedNAffectedZero}{97}
\newcommand{\affectedNAffectedFull}{82}
\newcommand{\affectedArithAll}{0.19}
\newcommand{\affectedArithAllPercent}{18.9}
\newcommand{\affectedArithAffected}{0.40}
\newcommand{\affectedArithAffectedPercent}{40.3}
\newcommand{\affectedGeomeanTimeAll}{2.6}
\newcommand{\affectedGeomeanTimeAffected}{3.6}
\newcommand{\affectedGeomeanTimeNotffected}{1.0}
\newcommand{\affectedGeomeanItersAll}{4454.6}
\newcommand{\affectedGeomeanItersAffected}{9374.3}
\newcommand{\affectedGeomeanItersNotffected}{875.3}
\newcommand{\affectedFilteredInstances}{797}
\newcommand{\affectedFilteredRateAllPercentage}{48.1}
\newcommand{\affectedFilteredRateFilteredPercentage}{51.0}
 \newcommand{\rootNinstances}{$547$}
\newcommand{\rootGapClosedAllPercent}{$7.5$}
\newcommand{\rootGapClosedAffectedPercent}{$20.8$}
\newcommand{\rootGapClosedAffectedNInstances}{$178$}
\newcommand{\rootGapClosedBetterPercent}{$24.9$}
\newcommand{\rootGapClosedBetterNInstances}{$165$}
\newcommand{\rootGapClosedWorsePercent}{$-31.1$}
\newcommand{\rootGapClosedWorseNInstances}{$13$}
\newcommand{\treeNInstancesAll}{564}
\newcommand{\treeNSolvedAllDefault}{249}
\newcommand{\treeNSolvedAllNoprop}{247}
\newcommand{\treeTimeAllNoprop}{+0\%}
\newcommand{\treeNSolvedAllOff}{244}
\newcommand{\treeTimeAllOff}{+3\%}
\newcommand{\treeNInstancesOne}{166}
\newcommand{\treeNSolvedOneDefault}{159}
\newcommand{\treeNSolvedOneNoprop}{158}
\newcommand{\treeTimeOneNoprop}{+1\%}
\newcommand{\treeNSolvedOneOff}{155}
\newcommand{\treeTimeOneOff}{+1\%}
\newcommand{\treeNInstancesTen}{109}
\newcommand{\treeNSolvedTenDefault}{102}
\newcommand{\treeNSolvedTenNoprop}{101}
\newcommand{\treeTimeTenNoprop}{+2\%}
\newcommand{\treeNSolvedTenOff}{98}
\newcommand{\treeTimeTenOff}{+18\%}
\newcommand{\treeNInstancesHundred}{44}
\newcommand{\treeNSolvedHundredDefault}{38}
\newcommand{\treeNSolvedHundredNoprop}{40}
\newcommand{\treeTimeHundredNoprop}{-3\%}
\newcommand{\treeNSolvedHundredOff}{33}
\newcommand{\treeTimeHundredOff}{+36\%}
\newtheorem{theorem}{Theorem}
\newtheorem{remark}{Remark}
\newtheorem{lemma}{Lemma}
\newtheorem{definition}{Definition}
\newtheorem{claim}{Claim}
\begin{document}

\ZTPAuthor{
  \ZTPHasOrcid{Benjamin Müller}{0000-0002-4463-2873},
  \ZTPHasOrcid{Felipe Serrano}{0000-0002-7892-3951},
  \ZTPHasOrcid{Ambros Gleixner}{0000-0003-0391-5903}}
\ZTPTitle{Using two-dimensional Projections for Stronger Separation and Propagation of Bilinear Terms}
\ZTPNumber{19-15}
\ZTPMonth{March}
\ZTPYear{2019}

\title{\bf Using two-dimensional Projections for Stronger Separation and Propagation of Bilinear Terms}
\author{\ZTPHasOrcid{Benjamin Müller}{0000-0002-4463-2873}\and\
  \ZTPHasOrcid{Felipe Serrano}{0000-0002-7892-3951}\and\
  \ZTPHasOrcid{Ambros Gleixner}{0000-0003-0391-5903}}
\hypersetup{pdftitle={Using two-dimensional Projections for Stronger Separation and Propagation of Bilinear Terms},
  pdfauthor={Benjamin Müller, Felipe Serrano, Ambros Gleixner}}

\zibtitlepage
\maketitle

\begin{abstract}
  One of the most fundamental ingredients in mixed-integer nonlinear programming solvers is the well-known
McCormick relaxation for a product of two variables $x$ and $y$ over a box-constrained domain. 
The starting point of this paper is
the fact that the convex hull of the graph of $xy$ can be much tighter when computed over a strict, non-rectangular subset of the box.  In order to exploit this in practice, we propose to compute valid
linear inequalities for the projection of the feasible region onto the $x$-$y$-space by solving a sequence of linear
programs akin to optimization-based bound tightening.  These valid inequalities allow us to employ results from the literature 
to strengthen the classical McCormick relaxation.  As a consequence, we obtain a
stronger convexification procedure that exploits problem structure and can benefit from supplementary information obtained
during the branch-and bound algorithm such as an objective cutoff.
We complement this by a new bound tightening procedure that efficiently computes the best possible bounds for $x$, $y$,
and $xy$ over the available projections.
Our computational evaluation using the academic solver \scip exhibit that the proposed methods are applicable to a large portion of the public test library \minlplib and help to improve performance significantly.
 \end{abstract}

\section{Introduction}

This paper is concerned with solving nonconvex mixed-integer quadratically constrained programs (\MIQCP{}s) of the form
\begin{equation}
  \begin{aligned}
    \min \quad& c^\T x \\
    \text{s.t.} \quad& x^\T Q_k x + q_k^\T x \le b_k && \fa k \in \consindex, \\
    &\lb_{i} \le \xi \leq \ub_{i} && \fa i \in \varindex, \\
    &\xi \in \Z && \fa i \in \intvarindex,
  \end{aligned}
  \label{eq:miqcp}
\end{equation}
where~$\varindex := \{1,\ldots,\nvars\}$ is the index set of variables,~$\consindex := \{1,\ldots,\nconss\}$ the index set
of constraints, ~$c\in \R^{\nvars}$ is the objective function vector,~$\lb \in \R^n$ and~$\ub \in \R^n$ are the vectors
of lower and upper bounds of the variables, $\intvarindex \subseteq \varindex$ is the index set of integer
variables, and $Q_k \in \R^{n \times n}$ is a symmetric matrix for each $k \in \consindex$.
Many real-world applications are inherently nonlinear and need to be tackled as~\MIQCP{}s or general mixed-integer nonlinear programs (\MINLP{}s) that include quadratic constraint functions.
For a selection
see, e.g.,~\cite{GrossmanSahinidis2002}.
In this article, we develop new convexification and bound tightening techniques that are directly relevant to achieve
improvements within the algorithmic framework of spatial branch-and-bound, which forms the basis of many modern solvers
in global optimization, e.g., \antigone~\cite{ANTIGONE}, \baron~\cite{Sahinidis2017}, \couenne~\cite{Couenne}, and
\scip~\cite{SCIP}.

For clarity of presentation we assume that the \MIQCP is equivalently reformulated as
\begin{equation}
  \begin{aligned}
    \min \quad & c^\T x \\
    \text{s.t.} \quad & \langle X,Q_k\rangle + q_k^\T x \le b_k && \fa k \in \consindex, \\
    &\lb_{i} \le \xi \leq \ub_{i} && \fa i \in \varindex, \\
    &\xi \in \Z && \fa i \in \intvarindex, \\
    & X = xx^\T.
  \end{aligned}
  \label{eq:miqcpref}
\end{equation}
This reformulation is obtained by linearizing the original
quadratic constraints via auxiliary variables and new constraints of the form $\Xij = \xi\xj$ for $i,j \in \varindex$. Usually, these
constraints are only added for those $i,j \in \varindex$ for which $\xi \xj$ appears in at least one of the quadratic
constraints of~\eqref{eq:miqcp}, i.e., if $(Q_k)_{ij} \neq
0$ for some $k \in \consindex$. Formulation~\eqref{eq:miqcpref} is of major importance when using convex relaxations for solving \MIQCPs to global
optimality and allows us to focus on tight relaxations for the elementary nonconvex constraints of the form $\Xij = \xi \xj$ with $i \neq j$.
The techniques presented in this paper extend fully to such bilinear constraints present in general reformulations that are applied when solving factorable \MINLPs to global optimality~\cite{Ryoo1995,Smith1997,Belotti2009}.
For example, when a
nonlinear constraint of the form $f(x) \, g(x) \le d$ is reformulated as
\begin{equation}
  w_1 = f(x), \; w_2 = g(x), \; w_1 w_2 \le d,
\end{equation}
with auxiliary variables $w_1,w_2 \in \R$, our results can be directly applied
to improve the convexification and propagation of the product $w_1 w_2$.

Our initial motivation is as follows.
Classically, a linear relaxation for the nonconvex constraint $\Xij = \xi \xj$, $i \not= j$, is constructed by adding the four inequalities
\begin{equation}\label{eq:mccormick}
\begin{aligned}
  \Xij &\ge \ub_j \xi + \ub_i \xj - \ub_i \ub_j, \\
  \Xij &\ge \lb_j \xi + \lb_i \xj - \lb_i \lb_j, \\
  \Xij &\le \ub_j \xi + \lb_i \xj - \lb_i \ub_j, \\
  \Xij &\le \lb_j \xi + \ub_i \xj - \ub_i \lb_j,
\end{aligned}
\end{equation}
often called McCormick inequalities~\cite{McCormick1976}.
These inequalities are best possible on
the domain $\ijbox$ in the sense that they describe the convex and concave envelope of $\xi\xj$~\cite{Khayyal1983}.
However, they do not take into account the presence of other linear and
nonlinear inequalities of~\eqref{eq:miqcpref}.

Suppose that for all feasible points $(x,X)$
of~\eqref{eq:miqcpref} the points $(\xi,\xj)$ are contained in a polytope $P$ that is a strict subset of $\ijbox$. As can be seen in Figure~\ref{fig:introexamples}a,
the convex hull of the graph of $\xi\xj$ over $P$ is not given by~\eqref{eq:mccormick} and is not
polyhedral. Tangent inequalities for the convex and concave envelope of $\xi\xj$ over $P$ lead to a stronger linear
relaxation of $\Xij = \xi \xj$ than~\eqref{eq:mccormick}.
\begin{figure}[t]
  \centering
  \begin{minipage}[t]{0.475\textwidth}
    \centering
    \includegraphics[width=0.8\textwidth]{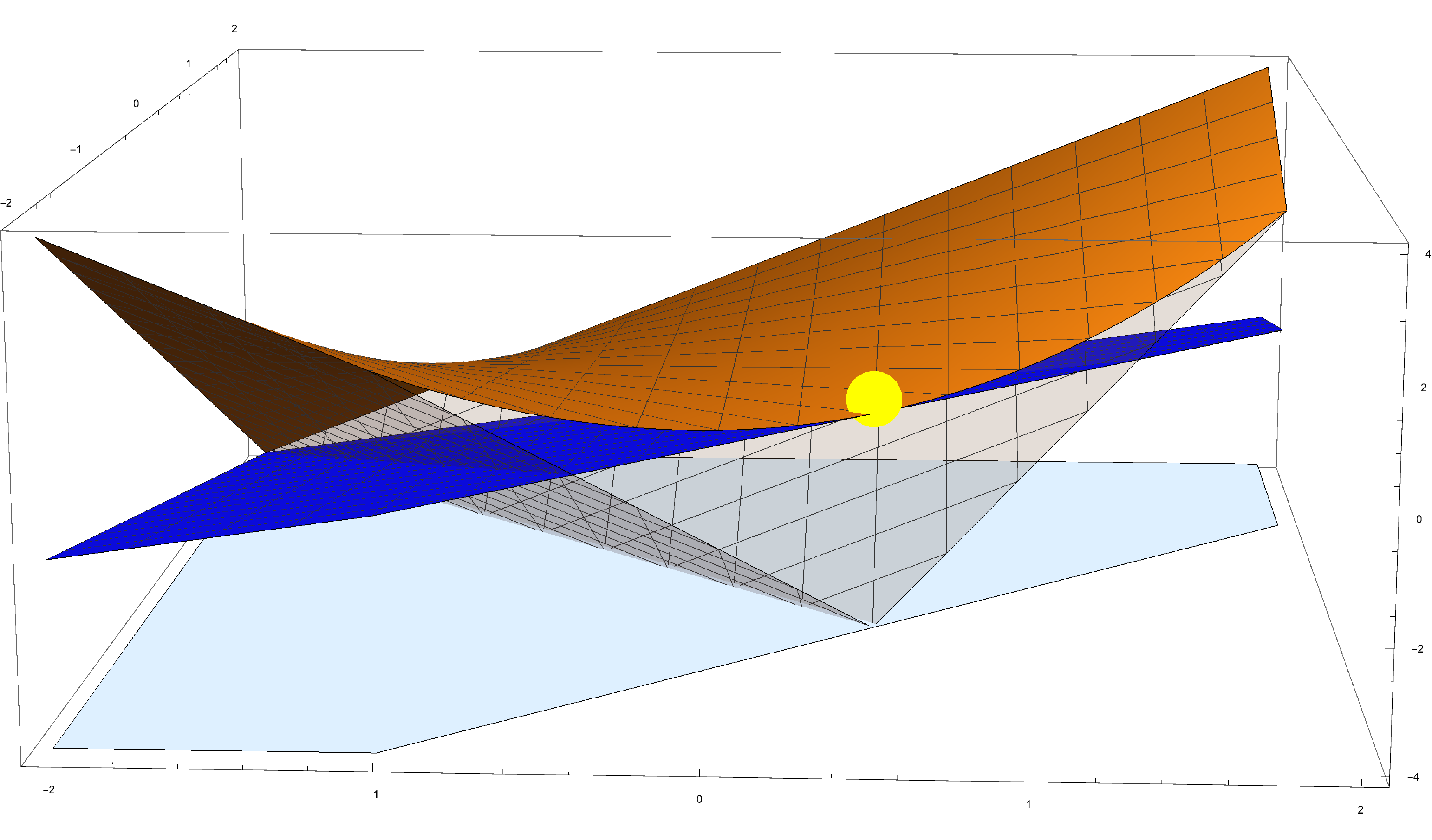}
  \subcaption{Improving separation: Two functions that are valid underestimators for $\xi\xj$ (orange) on a polyhedral
    domain (cyan). The figure shows that a linearization of the convex envelope (blue) at a given point (yellow) is
    locally tighter than the McCormick relaxation (gray).}
  \end{minipage}
  \hfill
  \begin{minipage}[t]{0.475\textwidth}
    \centering
    \raisebox{3.5ex}{\includegraphics[width=0.9\textwidth]{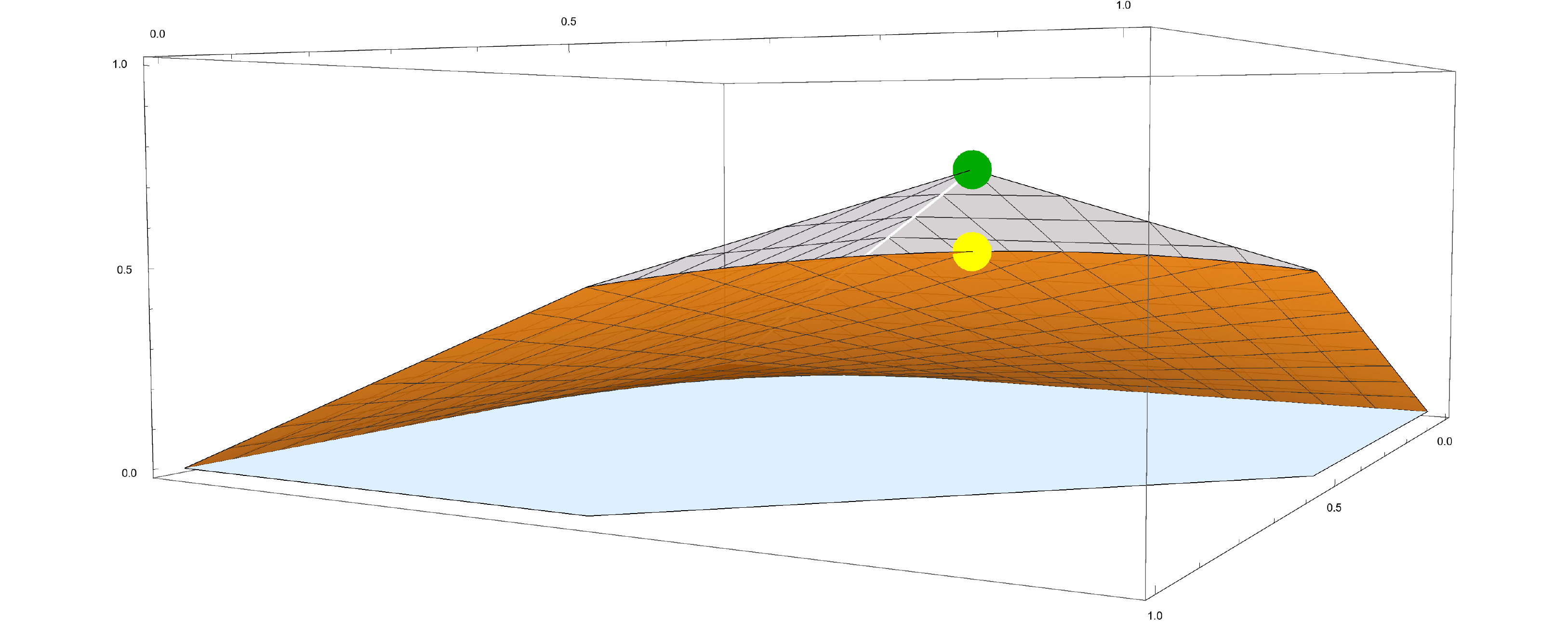}}
    \subcaption{Improving propagation: The colored plot shows $\xi\xj$ over the polytope
    $P = [0,1]^2 \cap \{(\xi,\xj) \mid \xi + \xj \le \frac{3}{2} \}$ (cyan). The yellow point corresponds to the best
    possible bound of $\xi\xj$ on $P$, which is better than the best possible bound implied by the McCormick
    relaxation~\eqref{eq:mccormick} on $P$ (green point).}
  \end{minipage}
  \caption{Both separation and propagation can be improved by exploiting the
    presence of a non-rectangular, polyhedral domain.}
  \label{fig:introexamples}
\end{figure}

In addition to tighter underestimators, knowledge about~$P$ can be exploited to construct tighter variable bounds. For
example, consider the polytope
\begin{equation}
  P = \{ (\xi,\xj) \in [0,1]^2 \mid \xi + \xj \le 3/2 \}.
\end{equation}
The best possible upper bound for $\Xij = \xi \xj$ over $P$ is given as
\begin{equation}\label{eq:propexample}
  \max \{ \Xij \mid (\xi,\xj) \in P, \, \Xij = \xi \xj \} = \frac{9}{16}.
\end{equation}
This improves upon the upper bound implied by the McCormick
relaxation over $P$,
\begin{equation}
  \max \{ \Xij \mid (\xi,\xj) \in P, \eqref{eq:mccormick} \} = \frac{3}{4}.
\end{equation}
An illustration is given in Figure~\ref{fig:introexamples}b.

These two examples show that a two-dimensional polytope $P \subsetneq \ijbox$ for $(\xi,\xj)$ can be exploited in order to improve the convexification and propagation of $\Xij = \xi \xj$. In order to leverage this potential in practice, one needs to
determine how to efficiently compute
\begin{enumerate}
\item a suitable polytope $P$,
\item tangent inequalities for the convex and concave envelope of $\xi\xj$ over $P$, and
\item tighter variable bounds for $\xi$, $\xj$, and $\Xij$ over $P$.
\end{enumerate}
For the second step, an algorithm to compute tangent inequalities for the envelopes of
$\xi\xj$ over $P$ is presented in the recent paper by Locatelli~\cite{Locatelli2018}. One requirement of this algorithm is that $P$ needs to be explicitly given, as output of step one.

Ideally, the original formulation~\eqref{eq:miqcpref} already contains inequalities that only depend on the two variables of a
bilinear term. A good example are symmetry-breaking inequalities in circle packing problems.
For example, the instance \texttt{pointpack08} from the
\minlplib~\cite{MINLPLIB} test library contains constraints of the form
\begin{equation}\label{eq:example:packing}
  \begin{aligned}
    (x_1 - x_2)^2 + (y_1 - y_2)^2 &\ge 1, \\
    x_1 - x_2 &\le 0, \\
    (x_1, x_2, y_1, y_2) &\in [0,1]^4.\\
  \end{aligned}
\end{equation}
Here $(x_1,y_1)$ and $(x_2,y_2)$ are the centers of two circles. The quadratic constraint ensures a minimum
distance between these centers and the linear constraint orders them along the $x$-axis. In this case the inequality $x_1 \le x_2$
can be directly used for convexifying $x_1 x_2$ with Locatelli's algorithm.

However, for many instances  inequalities only depending on variables of a single bilinear term
may not appear in the initial formulation of the \MIQCP. Nevertheless, it might be possible that a substructure
of~\eqref{eq:miqcpref} implies such inequalities.
For example, consider the instance \texttt{crudeoil\_lee1\_05} from \minlplib. Aggregating the linear constraints
\begin{equation}
\begin{aligned} x_{260} + x_{292} + x_{324} + x_{356} + x_{451} & \le 50, \\
  -x_{394} + x_{525} + x_{526} + x_{527} & = 0,\\
  x_{260} + x_{292} + x_{324} + x_{451} + x_{527} & = 50,\\
  x_{525} & \ge 0, \\
  x_{526} & \ge 0,
\end{aligned}
\end{equation}
with the multiplier vector $(-\frac{1}{3},-\frac{1}{3},\frac{1}{3}, \frac{1}{3}, \frac{1}{3})$ shows that $x_{356} \le x_{394}$ is valid and thus it can
be used for strengthening the relaxation of $X_{356,394} = x_{356} \, x_{394}$.

In this spirit, the first contribution of this paper is a fully general scheme for computing projected relaxations $P$ in step one above.
It solves a sequence of linear
programs (\LPs) to compute a polyhedral relaxation of the projection of the feasible region onto the space of two
variables that appear bilinearly. The computed two-dimensional relaxation is described by at most eight inequalities.
Second, we introduce a bound tightening procedure for forward and backward propagation that solves a reduced nonconvex optimization problem.
\revision{This results in the best possible bounds for a bilinear term and its variables using the linear inequalities of the two-dimensional projection.}
Due to the construction of the projections, these optimization problems can be solved by inspecting only a constant number of points.
Last, we propose an effective way of incorporating these techniques into an \LP-based spatial branch-and-bound
solver and provide a detailed computational analysis of their impact.

The remainder of the paper is organized as follows. Section~\ref{section:literature} discusses relevant literature and
provides an overview of convex relaxations for~\eqref{eq:miqcpref}.
In Section~\ref{section:computeprojection}, we present a procedure for computing valid inequalities for the projections
of the feasible region onto the space of two variables.
Section~\ref{section:propagation} is dedicated to a bound tightening algorithm that exploits the computed projections.
Section~\ref{section:experiments} provides a thorough computational study using the \MINLP solver \scip on publicly available benchmark instances based on three experiments. First, we measure the basic potential of the methods by analyzing how many instances of \minlplib actually admit non-trivial
two-dimensional projections. Second, we study the dual bound improvement in the root node of the branch-and-bound
tree. Third, we evaluate the overall performance impact of the new methods on the full spatial branch-and-bound search.
Section~\ref{section:conclusion} gives concluding remarks.

\section{Background}
\label{section:literature}

In this section, we give a brief overview of the relevant literature. First, we review important convex relaxations for \MIQCPs and existing convexification methods for special nonconvex functions over non-rectangular domains. Second, we discuss basic bound tightening algorithms and their relation to
convexification methods. Finally, we give a short summary of Locatelli's algorithm and its complexity.

\paragraph{Convex relaxations for \MIQCPs}

Two important convex relaxations for \MIQCPs that have been exhaustively studied in the literature are semidefinite
programming (\SDP)~\cite{Vandenberghe1996} and the reformulation-linearization technique (\RLT)~\cite{Sherali1999}.
Both relaxations utilize the $\Xij$ variables of~\eqref{eq:miqcpref} in order to linearize $\xi\xj$.
For an \SDP relaxation the nonconvex constraint $X = xx^\T$ is relaxed to the convex constraint $X \psd xx^\T$, which is
equivalent to
\begin{equation*}
  \begin{bmatrix} 1 & x^\T \\ x & X \end{bmatrix} \psd 0,
\end{equation*}
via the Schur complement~\cite{Boyd2004}. Even though the resulting \SDP relaxation is
efficiently solvable in theory, optimizing \SDPs in practice is a numerically challenging task. We refer
to~\cite{Poljak1995,Fujie1997,Helmberg2000,Lemarechal2001,Luo2010,Bao2011} for applications which utilize \SDP
relaxations to solve quadratic optimization problems.

While the construction of an \SDP relaxation is independent of any linear or linearized constraints,
an \RLT-based relaxation uses them directly. After introducing auxiliary variables
$X \in \R^{\nvars \times \nvars}$ and the nonconvex constraints $X = xx^\T$, the idea is to linearize the product of all
selections of two linear inequalities with the help of $X$. For example, consider the inequalities $x_i \ge 0$ and
$\alpha^\T x - \alpha_0 \ge 0$. Multiplying the second inequality by $x_i$ gives
\begin{equation*}
  \sum_{j=1}^\nvars \alpha_j \xi \xj - \alpha_0 \xi \ge 0,
\end{equation*}
which is linearized with $X$ to
\begin{equation*}
  \sum_{j=1}^\nvars \alpha_j \Xij - \alpha_0 \xi \ge 0.
\end{equation*}
These \emph{\RLT inequalities} can significantly improve a relaxation of~\eqref{eq:miqcpref},
see~\cite{SheraliFraticelli2002,Meyer2006,Anstreicher2009}. Note that the McCormick relaxation~\eqref{eq:mccormick} is
a special form of \RLT that uses variable bound constraints only.

To obtain a convex relaxation for~\eqref{eq:miqcp}, it is not mandatory to reformulate the \MIQCP
into~\eqref{eq:miqcpref}. Following the ideas of McCormick~\cite{McCormick1976}, Vigerske~\cite{Vigerske2013} uses
linear underestimators $\tilde f_k : [\lb,\ub] \rightarrow \R$ for each nonlinear function
$f_k: [\lb,\ub] \rightarrow \R$ of a constraint $\sum_k f_k(x) \le 0$ and obtains the valid cut $\sum_k \tilde f_k(x) \le 0$ by summing the
underestimators.
The advantage of this approach is that it does not require the additional variables $X$ but Anstreicher~\cite{Anstreicher2012} shows
 that even when replacing each quadratic function with its convex envelope, this is in general
weaker than exploiting the extended formulation.

\paragraph{Convexification of bilinear terms}

Although \RLT-based relaxations utilize the \LP relaxation, they do not necessarily describe the convex hull of the constraint $\Xij = \xi\xj$ over this relaxation. For example, consider the set
\begin{equation} \label{eq:rlt:example}
  \{(\xi,\xj,\Xij) \in [0,1]^3 \mid \Xij = \xi\xj, \, \xi \le \xj \}.
\end{equation}
The \RLT relaxation of~\eqref{eq:rlt:example} is equal to
\begin{equation*}
  \{(\xi,\xj,\Xij) \in [0,1]^3 \mid \eqref{eq:mccormick},\, \xi^2 \le \Xij \},
\end{equation*}
when keeping the convex constraint $\xi^2 \le \Xij$. However, the convex hull of~\eqref{eq:rlt:example}
is given by
\begin{equation*}
  \{(\xi,\xj,\Xij) \in [0,1]^3 \mid \eqref{eq:mccormick},\, \xi^2 \le (1 + \xi - \xj) \Xij \},
\end{equation*}
which is strictly tighter. This shows that \RLT does not fully exploit the presence of linear inequalities.

In the literature, different cases for convexifying a bilinear term over special sets have been studied:
Linderoth~\cite{Linderoth2005} proposed a branch-and-bound algorithm for solving nonconvex quadratically-constrained
quadratic programs. Variables of a bilinear term are partitioned into two-dimensional triangles and rectangles. He
characterized the convex and concave envelope of $\xi\xj$ over a triangular domain and used it to improve
upon~\eqref{eq:mccormick}.
Based on perspective functions, Hijazi~\cite{Hijazi2019} derived a closed formula for the convex and concave envelope
on a polytope of the form $P := \{ (\xi,\xj) \in \ijbox \mid \xi \le \xj \}$.
As mentioned above, an algorithm for computing tangent inequalities for the convex and concave envelope of $\xi\xj$ on a
general two-dimensional polytope $P$ has been presented by Locatelli~\cite{Locatelli2018}.
Instead of using information on $\xi$ and $\xj$, Miller et al.~\cite{Miller2011} showed a lifting procedure for cutting
planes for $\Xij = \xi\xj$ that exploits bounds on $\Xij$ that are not implied by $\xi\xj$.

\paragraph{Bound tightening methods}

As it is shown in~\eqref{eq:mccormick}, there is an interdependency between the variable bounds and the strength of the
(convex) relaxation. Tighter variable bounds result in tighter relaxations for nonconvex constraints and vice versa. The
two most practically relevant methods to tighten variable bounds are \emph{feasibility-based bound tightening} (\FBBT) and
\emph{optimization-based bound tightening}~\cite{QuesadaGrossmann1995} (\OBBT). \FBBT is based on interval arithmetic,
see, e.g.,~\cite{BelottiCafieriLeeLiberti2012TR}, and computes activities of nonlinear expressions over the domain of
the variables (forward propagation) and conversely propagating the bounds on the constraint activities back to the
bounds of the variables (reverse propagation). Implementations usually rely on the representation of nonlinear term as
nodes of a directed acyclic expression graph, see~\cite{Belotti2009} or~\cite{Vigerske2013} for
details. \revision{\OBBT computes tighter lower and upper bounds for a variable $\xi$ by minimizing and maximizing $\xi$
over a linear relaxation of~\eqref{eq:miqcpref}. These two linear programs are called \emph{\OBBT \LPs}.}
Computing the best possible bounds for all variables \revision{over a fixed linear relaxation} requires solving $2\nvars$
many \OBBT \LPs and thus \OBBT is often too expensive to be applied in every node of a branch-and-bound tree. Gleixner et
al.~\cite{Gleixner2017} show how dual solutions of \OBBT \LPs can be used during the tree search as a fast
approximation of \OBBT.

\paragraph{Locatelli's algorithm}

Let $P \subset \R^2$ be a polytope and let $(\xi^*,\xj^*) \in P$. Locatelli showed that computing a tangent inequality
of the convex and concave envelope of $\xi\xj$ at $(\xi^*,\xj^*)$ reduces to selecting at most three points in the
boundary of $P$ such that $(\xi^*,\xj^*)$ is contained in the convex hull of these points. Figure~\ref{fig:locatelli}
shows all possible cases that can occur. The resulting inequality is determined by either

\begin{enumerate}
\item three vertices of $P$,
\item a vertex and a point $p$ on a facet of $P$ such that the inequality is tangent at $p$, or
\item two points $p$ and $q$ on different facets of $P$ such that the inequality is tangent at $p$ and $q$.
\end{enumerate}

Locatelli derived closed formulas for computing the inequalities in each of the three cases.
When $P = \ijbox$, they collapse to the first case and yield the McCormick
inequalities~\eqref{eq:mccormick}.
The third case only occurs if $P$ is described by at least two non-axis parallel
facets that have both a positive or both a negative slope.

To find a valid inequality that is also tangent to the convex (concave) envelope, one needs to iterate through all
possible selections of the points as discussed above, and select the inequality that has the smallest (largest) value
at $(\xi^*,\xj^*)$. The computational cost for iterating through all possible choices and computing the tangent
inequality is
\begin{equation*}
  O\left(\mvec{|V|}{3} + |V| \cdot |F| + \mvec{|F|}{2}\right),
\end{equation*}
where $|V|$ is the number of vertices and $|F|$ be the number of facets of $P$ that are not axis-parallel.

\begin{figure}[t]
  \centering
  \begin{minipage}{0.28\textwidth}
    \centering
    \includegraphics[width=\textwidth]{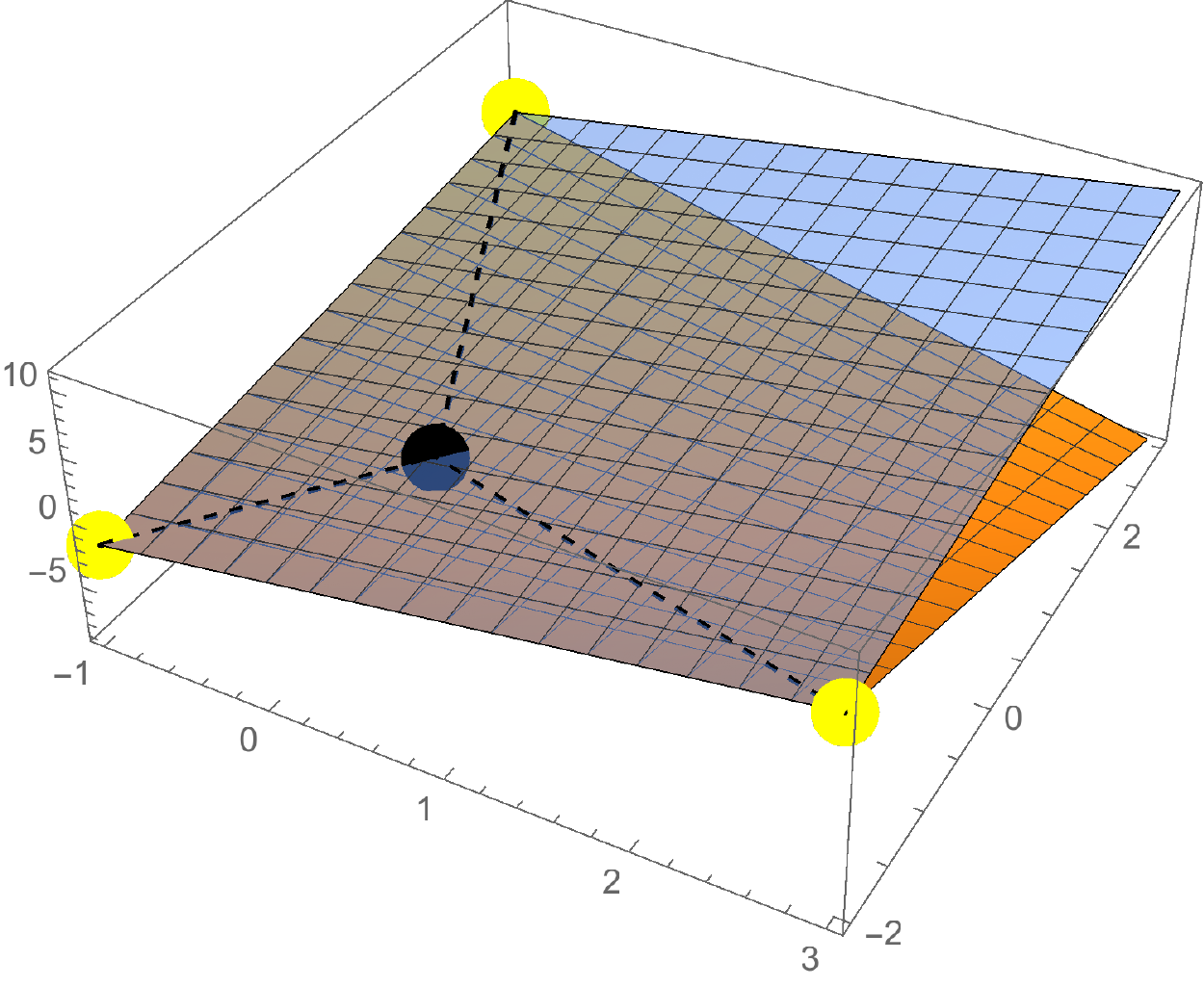}
  \end{minipage}
  \hspace{2ex}
  \begin{minipage}{0.3\textwidth}
    \centering
    \includegraphics[width=\textwidth]{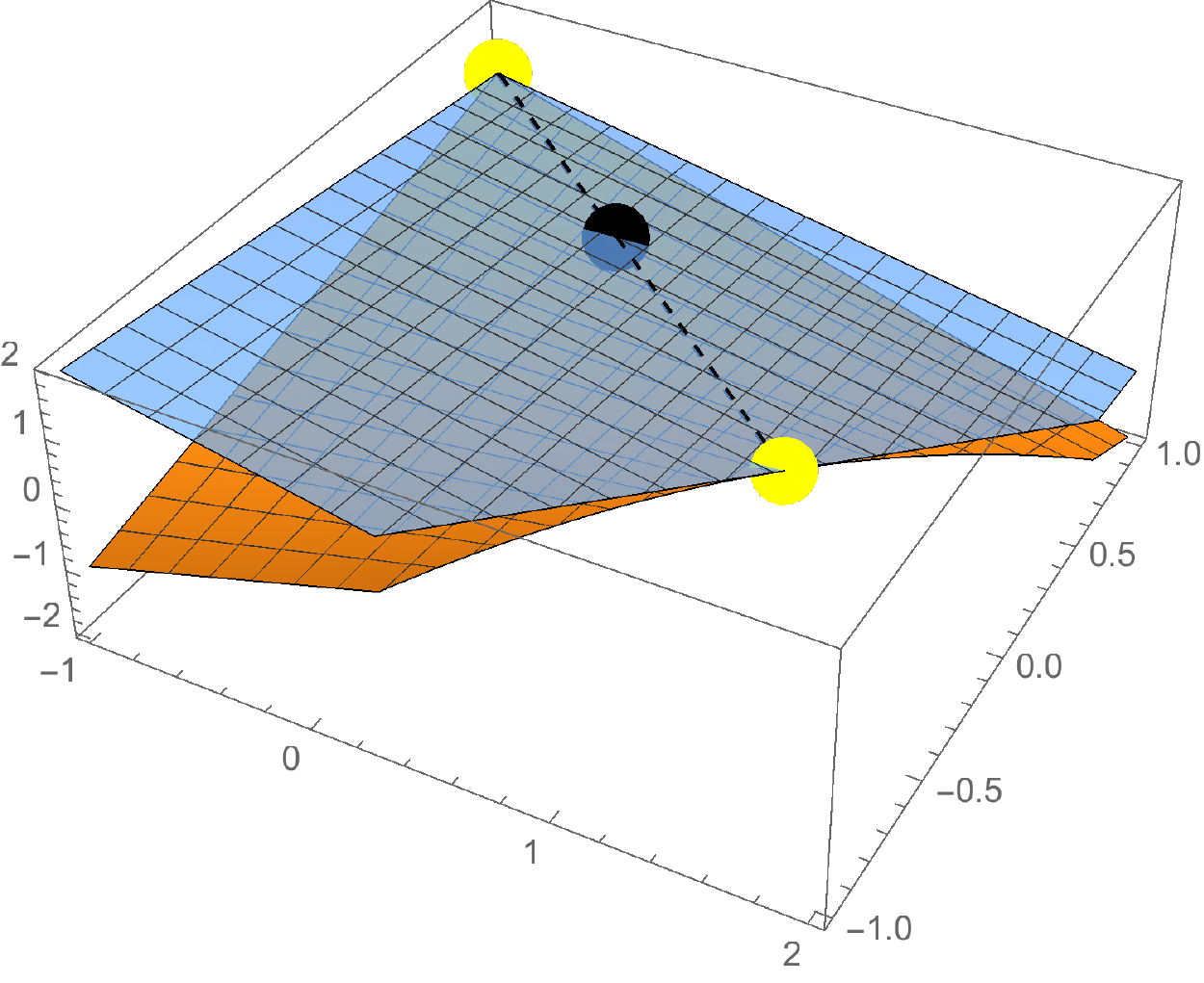}
  \end{minipage}
  \hspace{2ex}
  \begin{minipage}{0.3\textwidth}
    \centering
    \includegraphics[width=\textwidth]{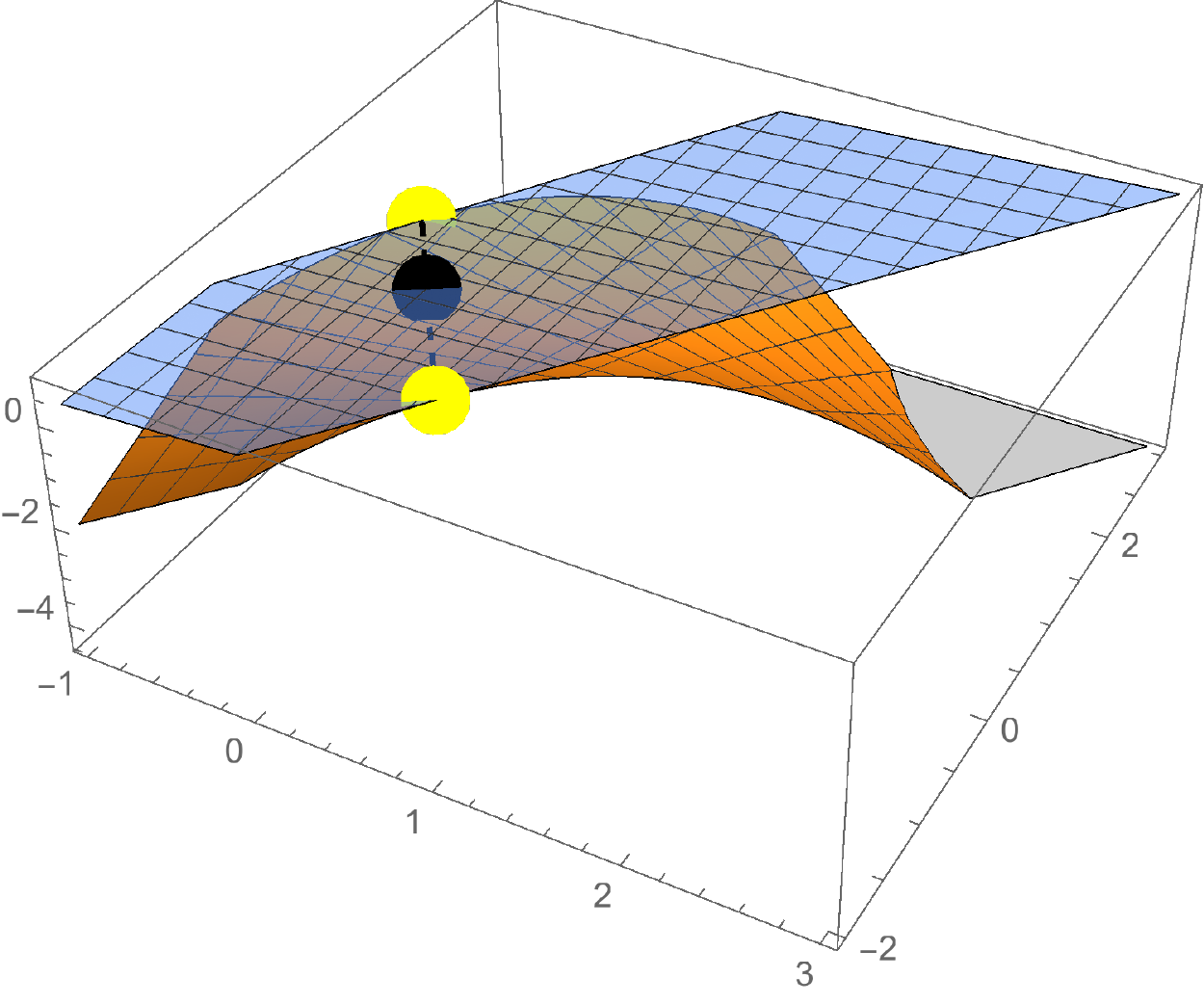}
  \end{minipage}
  \caption{All possibilities for computing a tangent linear inequality (blue) for the convex and concave envelope of
    $\xi\xj$ on a two-dimensional polytope $P \subset \R^2$ at a given point (black).  The inequality is obtained by
    selecting at most three points (yellow) that are on the boundary of $P$ such that the given point is in the convex hull
    of the selected points (dashed lines).}
  \label{fig:locatelli}
\end{figure}

\section{Two-dimensional projected relaxations}
\label{section:computeprojection}
Consider a single nonconvex quadratic constraint $\Xij = \xi\xj$ of~\eqref{eq:miqcpref} with $i \neq j$,
$\xi \in \ibox$, $\xj \in \jbox$, and $\Xij \in \R$. Let $\feasset$ be the set of feasible points of the original \MINLP~\eqref{eq:miqcpref}
and let
\begin{equation} \label{eq:proj:feasset}
  \projfeasset := \left\{ (\xi,\xj) \mid x \in \feasset \right\} \subseteq \ijbox
\end{equation}
be the projection of $\feasset$ onto the $(\xi,\xj)$-space.
The best possible convex relaxation for the nonconvex constraint $\Xij = \xi \xj$ is given by the convex hull of
$\{ (\xi,\xj,\Xij) \in \projfeasset \times \R \mid \xi \xj = \Xij\}$.
However, it is unclear how to enforce this relaxation in practice.
First, the set $\feasset$ is unknown and in general even finding a single point in $\feasset$ is \NP-hard.
Second, $\projfeasset$ can be a non-polyhedral, nonconvex, disconnected set and thus cannot be used by
Locatelli's algorithm. Hence, instead of targeting $\projfeasset$ directly, we propose to compute a polyhedral relaxation
$\Pij \subset \R^2$ of $\projfeasset$, i.e., $\projfeasset \subseteq \Pij$.
This relaxation is based on a polyhedral relaxation of $\feasset$, which we denote by
\begin{equation}\label{eq:lprelax}
    \relaxset := \{ (x,X) \mid  A_1 x + A_2 X \le d \},
\end{equation}
where $X$ is assumed to be a vector. These relaxations are readily available in LP-based spatial branch-and-bound algorithms.
They are constructed from linear constraints present in the original problem formulation, from cutting planes based on integrality information, and from other valid linearizations
of quadratic constraints such as gradient cuts.

Similar to~\eqref{eq:proj:feasset}, let
\begin{equation}
  \projrelaxset := \left\{ (\xi,\xj) \mid x \in \relaxset \right\} \subseteq \ijbox
\end{equation}
be the projection of $\relaxset$ onto the $(\xi,\xj)$-space. The best polyhedral relaxation from $\relaxset$ is $\projrelaxset$. Unfortunately, exponentially many inequalities may be necessary
to describe $\projrelaxset$~\cite{Ben2001}. For this reason, exact projection methods such as standard homotopy procedures~\cite{Nazareth1991}
may be overly expensive in practice.
This motivates the computation of a relaxation $\Pij$ of $\projrelaxset$. In view of
the complexity of Locatelli's algorithm, we would like for $\Pij$ to have few vertices and
facets. Specifically, we propose an algorithm that yields a $\Pij$ described by at most four axis-parallel and
at most four general inequalities. Later, we show that the quotient of the volume of $\projrelaxset$ and the volume of
the constructed $\Pij$ is bounded by $1/2$ from below.

\begin{remark}\normalfont
  An even tighter relaxation can be achieved by also discarding feasible points from the set $\feasset$ by using an
  objective cutoff $c^\T x \le \incumbent$. Typically, solutions with an objective value $\incumbent$ are found by
  heuristics during spatial branch-and-bound. Such a solution reduces the set of relevant feasible points to
  \begin{equation*}
    \feasset \cap \{(x,X) \mid c^\T x \le \incumbent\},
  \end{equation*}
  which might later result in even tighter $\Pij$.
\end{remark}

\subsection{Computing polyhedral projections with linear programming}
\label{section:projections:computing}

For $\projrelaxset \subsetneq \ijbox$ to hold, there must be at least one valid (facet-defining) inequality that
separates a vertex of $\ijbox$ from $\projrelaxset$.
To find some of those facets, if they exist, we follow a procedure akin to the shooting experiment~\cite{Hunsaker2008}.
The idea is to shoot a ray from a point $(\centerp_i, \centerp_j) \in \projrelaxset$ towards a vertex $(\xibar, \xjbar)$
of $\ijbox$.
This ray is going to intersect the boundary of $\projrelaxset$.
If the intersection is at the vertex, then the vertex is feasible for $\projrelaxset$.
Otherwise, any active constraint at the intersection point separates the vertex from $\projrelaxset$.
If the intersection point is in the interior of a facet, then that facet is the only active constraint. See
Figure~\ref{fig:inequality} for an illustration of the idea.
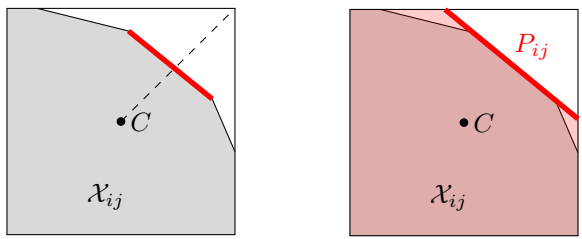
\begin{figure}[t]
    \centering
    \begin{minipage}{0.3\textwidth}
    \centering
    \begin{tikzpicture}
      \fill[color=black!15!white] (0,0) -- (0,3) -- (0.4,3) -- (1.6,2.7) -- (2.7, 1.8) -- (3,1.1) -- (3,0) -- cycle;
      \draw (0,0) -- (3,0) -- (3,3) -- (0,3) -- cycle;
      \draw (0.4,3) -- (1.6,2.7) -- (2.7, 1.8) -- (3,1.1);
      \draw[color=red, thick, line width=2] (1.6,2.7) -- (2.7, 1.8);
      \draw[fill=black] (1.5,1.5) circle (0.05) node[right]{$\centerp$};
      \draw[dashed] (1.5,1.5) -- (3,3);
      \node[] (t) at (1.3,0.5) {$\projrelaxset$};
    \end{tikzpicture}
  \end{minipage}
  \begin{minipage}{0.3\textwidth}
    \centering
    \begin{tikzpicture}
      \fill[color=black!15!white] (0,0) -- (0,3) -- (0.4,3) -- (1.6,2.7) -- (2.7, 1.8) -- (3,1.1) -- (3,0) -- cycle;
      \draw (0,0) -- (3,0) -- (3,3) -- (0,3) -- cycle;
      \draw (0.4,3) -- (1.6,2.7) -- (2.7, 1.8) -- (3,1.1);
      \draw[fill=red, opacity=0.2] (0,0) -- (0,3) -- (1.25,3.0) -- (3, 1.55) -- (3,0)-- cycle;
      \draw[fill=black] (1.5,1.5) circle (0.05) node[right]{$\centerp$};
      \node[] (t) at (1.3,0.5) {$\projrelaxset$};
      \node[] (p) at (2.4,2.5) {\textcolor{red}{$\Pij$}};
      \draw[color=red, thick, line width=2] (1.25,3.0) -- (3, 1.55);
    \end{tikzpicture}
  \end{minipage}
  \caption{An example for computing heuristically one facet of $\projrelaxset$. The idea is to find a facet with minimum distance
    with respect to the line connecting the center with a vertex of $\ijbox$. The red colored facet in the left picture
    is closest to the center among the three facets that separate the top-right vertex. The right picture shows that
    using this facet-defining inequality together with the bound constraints of $\xi$ and $\xj$ defines a polytope
    $\Pij$ which is a relaxation of $\projrelaxset$.}
  \label{fig:inequality}
\end{figure}
In our setting, the intersection point is $(\xi^*, \xj^*)$  where $(x^*, X^*, \theta^*)$ is the solution
of the following LP:
\begin{equation} \label{eq:diaglp_orig}
\begin{aligned}
               \max \; & \theta, \\
  \text{s.t. } & A_1 x + A_2 X \le d, \\
               & \centerp_i + \theta (\xibar - \centerp_i) = \xi, \\
               & \centerp_j + \theta (\xjbar - \centerp_j) = \xj, \\
               & \theta \in \R.
\end{aligned}
\end{equation}
Projecting out $\theta$ yields
\begin{equation} \label{eq:diaglp}
\begin{aligned}
  \max \; & \sign(\xibar - \centerp_i) \xi, \\
  \text{s.t. } & A_1 x + A_2 X \le d, \\
          & (\xj - \centerp_j)(\xibar - \centerp_i) = (\xi - \centerp_i)(\xjbar - \centerp_j),
\end{aligned}
\end{equation}
which is in the following denoted by $\diaglp{\xibar}{\xjbar}$.
As is shown next, the dual solution of this LP can be utilized to construct the inequality we are looking for.

Let $(x^*,X^*,\lambda^*,\mu^*)$ be an optimal primal-dual solution of $\diaglp{\xibar}{\xjbar}$, where $\lambda^* \geq 0$
are the dual multipliers of the inequality constraints and $\mu^* \in \R$ the dual multiplier for the equality
constraint of~\eqref{eq:diaglp}. Note that the aggregation
\begin{equation*}
  {\lambda^*}^\T (A_1 x + A_2 X) \le {\lambda^*}^\T d
\end{equation*}
is valid for $\relaxset$. Multiplying the stationarity condition
\begin{equation*}
  \sign(\xibar - \centerp_i) e_i^\T = {\lambda^*}^\T \mvec{A_1}{A_2} + \mu^* \left( \frac{\revision{e_i^\T}}{\xibar - \centerp_i} - \frac{\revision{e_j^\T}}{\xjbar - \centerp_j} \right)
\end{equation*}
of the Karush--Kuhn--Tucker~\cite{Karush2013,Kuhn2013} conditions by \revision{$(x^\T,X^\T)^\T$} shows that
\begin{equation*}
  \sign(\xibar - \centerp_i) \xi = {\lambda^*}^\T (A_1 x + A_2 X) + \mu^* \left( \frac{\xi}{\xibar - \centerp_i} - \frac{\xj}{\xjbar - \centerp_j} \right)
\end{equation*}
holds. Using $A_1 x + A_2 X \le d$ and reordering terms results in
\begin{equation} \label{eq:projineq}
  \left( \sign(\xibar - \centerp_i) - \frac{\mu^*}{\xibar - \centerp_i} \right) x_i + \frac{\mu^*}{\xibar - \centerp_i} x_j \le {\lambda^*}^\T d,
\end{equation}
which is valid for $\relaxset$ and only depends on $\xi$ and $\xj$ and is tight at the intersection point.

For having a complete method we need to specify $(\centerp_i, \centerp_j) \in \projrelaxset$.
The center of $\ijbox$ is guaranteed to be in $\projrelaxset$ after we applied \OBBT on $\xi$ and $\xj$ for the
relaxation $\relaxset$, as the next Lemma shows. Recall that \OBBT ensures $\lb_i = \min_{(x,X) \in \relaxset} \xi$ and
$\ub_i = \max_{(x,X) \in \relaxset} \xi$.
\begin{lemma}\normalfont\label{lemma:center}
  Let $\relaxset \neq \emptyset$, $\lb_i = \min_{(x,X) \in \relaxset} \xi$, $\ub_i = \max_{(x,X) \in \relaxset} \xi$,
  $\lb_j = \min_{(x,X) \in \relaxset} \xj$, and $\ub_j = \max_{(x,X) \in \relaxset} \xj$. Denote by
  \begin{equation*}
    \centerp := \left(\frac{\lb_i + \ub_i}{2}, \frac{\lb_j + \ub_j}{2}\right)
  \end{equation*}
  the center of $\ijbox$. It holds that $\centerp \in \projrelaxset$.
\end{lemma}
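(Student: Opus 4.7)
The plan is to exploit the convexity of $\projrelaxset$ and apply Helly's theorem in $\R^2$. Since $\relaxset$ is nonempty and OBBT ensures that the min and max of $\xi$ and $\xj$ over $\relaxset$ equal $\lb_i, \ub_i, \lb_j, \ub_j$, there exist points $p^{i-}, p^{i+}, p^{j-}, p^{j+} \in \relaxset$ attaining these four bound values on the $\xi$- or $\xj$-coordinate respectively. Since $\relaxset$ respects the variable bounds, the remaining coordinates of these four points lie in $[\lb_i, \ub_i]$ and $[\lb_j, \ub_j]$ accordingly.

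I introduce the four half-planes $H_1 = \{\xi \leq \centerp_i\}$, $H_2 = \{\xi \geq \centerp_i\}$, $H_3 = \{\xj \leq \centerp_j\}$, $H_4 = \{\xj \geq \centerp_j\}$ in the $(\xi, \xj)$-plane, whose common intersection is exactly $\{\centerp\}$. Then $\centerp \in \projrelaxset$ is equivalent to $\projrelaxset \cap H_1 \cap H_2 \cap H_3 \cap H_4 \neq \emptyset$, and by Helly's theorem applied to five convex sets in $\R^2$ it suffices to check that every three of them have nonempty common intersection.

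Any three of the four half-planes alone intersect in a line, half-line, or quadrant through $\centerp$ and are trivially nonempty. For each triple involving $\projrelaxset$, I exhibit an explicit witness by averaging OBBT points: the $(\xi, \xj)$-coordinates of $\tfrac{1}{2}(p^{i-} + p^{i+}) \in \relaxset$ satisfy $\xi = \centerp_i$, placing this point in $\projrelaxset \cap H_1 \cap H_2$, and analogously $\tfrac{1}{2}(p^{j-} + p^{j+})$ witnesses $\projrelaxset \cap H_3 \cap H_4$. For each mixed triple $\{\projrelaxset, H_s, H_t\}$ with $s \in \{1, 2\}$ and $t \in \{3, 4\}$, averaging one $i$-extreme point with one $j$-extreme point produces a witness: e.g., $\tfrac{1}{2}(p^{i-} + p^{j-})$ has $\xi$-coordinate at most $\tfrac{1}{2}(\lb_i + \ub_i) = \centerp_i$ and $\xj$-coordinate at most $\tfrac{1}{2}(\lb_j + \ub_j) = \centerp_j$, so its projection lies in $\projrelaxset \cap H_1 \cap H_3$; the three symmetric choices handle the remaining mixed triples.

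The main conceptual step is the Helly reformulation; once this framing is in place, each three-wise nonemptiness check reduces to a one-line midpoint calculation, and no sign-pattern case analysis is required. A more elementary alternative would be to argue directly that $\centerp$ lies in the convex hull of the four projected OBBT points via a separating hyperplane argument, ruling out each of the four sign patterns of the normal vector by combining the point constraints pairwise; this also works but is noticeably longer than the Helly route.
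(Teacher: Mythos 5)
Your proof is correct, but it takes a genuinely different route from the paper. The paper argues by contradiction via a separating valid inequality: if $\centerp \notin \projrelaxset$, any valid inequality $\alpha_i \xi + \alpha_j \xj \le \alpha_0$ cutting off $\centerp$ must also cut off two adjacent vertices of $\ibox \times \jbox$, which forces one of the four bounds (say $\lb_j$) to be unattained over $\relaxset$ \dash\ a contradiction with the \OBBT{} hypothesis; by symmetry only one of the four adjacent-vertex cases needs to be written out. You instead reformulate $\centerp \in \projrelaxset$ as the nonemptiness of the intersection of $\projrelaxset$ with the four half-planes $\xi \lessgtr \centerp_i$, $\xj \lessgtr \centerp_j$, and apply Helly's theorem in $\R^2$ to the resulting five convex sets; the ten triple-wise intersections are then witnessed either by $\centerp$ itself (for triples of half-planes) or by projections of midpoints of bound-attaining points of $\relaxset$, which exist by convexity of the polyhedron $\relaxset$ and by the fact that every point of $\relaxset$ satisfies $\xi \in \ibox$, $\xj \in \jbox$ by definition of the optimized bounds. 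All ten checks go through, so the argument is complete. What each approach buys: the paper's contradiction is shorter and needs only that $\projrelaxset$ is convex (so that non-membership yields a separating valid inequality), while yours is constructive in spirit, avoids the adjacent-vertex case analysis entirely, and makes explicit which points of $\relaxset$ certify the conclusion \dash\ at the cost of invoking Helly and enumerating six midpoint witnesses. Your closing remark that the separating-hyperplane alternative is ``noticeably longer'' is not borne out by the paper, whose version of that argument is in fact the more compact of the two; it is essentially the convex-hull/sign-pattern argument you sketch, compressed to a single symmetric case.
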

\begin{proof}\normalfont
  Assume that $C \not\in \projrelaxset$. It follows that there is an inequality
  $\alpha_i \xi + \alpha_j \xj \le \alpha_0$ that is valid for $\projrelaxset$ and separates $\centerp$. The center
  $\centerp$ can only be separated if the inequality separates at least two adjacent vertices of the rectangular domain
  $\ijbox$. Assume that it separates $(\lb_i,\lb_j)$ and $(\ub_i, \lb_j)$ (all other three cases work analogously),
  i.e., $\alpha_i \lb_i + \alpha_j \lb_j > \alpha_0$ and $\alpha_i \ub_i + \alpha_j \lb_j > \alpha_0$.  This immediately
  shows that there is no feasible point in $\projrelaxset$ with $\xj = \lb_j$, which is a contradiction to
  $\lb_j = \min_{(x,X) \in \relaxset} \xj$ and $\relaxset \neq \emptyset$. Figure~\ref{fig:lemma:center} illustrates the
  idea of the proof.
\end{proof}

\begin{figure}[t]
  \centering
  \hspace{20ex}
  \begin{tikzpicture}
    \fill[color=black!15!white] (0.0,0.95) -- (3,2.35) -- (3,3) -- (0,3) -- cycle;
    \draw (0,0.0) node[below]{$(\lb_i,\lb_j)$} -- (3,0.0) node[below]{$(\ub_i,\lb_j)$} -- (3,3) -- (0,3) --cycle;
    \draw[fill=black] (1.5,1.5) circle (0.05) node[right]{$\centerp$};
    \draw (-0.3,0.8) -- (3.3,2.5) node[right]{$\alpha_i \xi + \alpha_j \xj \le \alpha_0$};
    \draw[dashed,thick] (0,0.95) -- (3,0.95);
  \end{tikzpicture}
  \caption{Idea of the proof of Lemma~\ref{lemma:center}. The gray shaded area is the feasible region described by the
    inequality $\alpha_i \xi + \alpha_j \xj \le \alpha_0$. The dashed line corresponds to a tighter lower bound on $\xj$
    that is implied by the inequality.}
  \label{fig:lemma:center}
\end{figure}
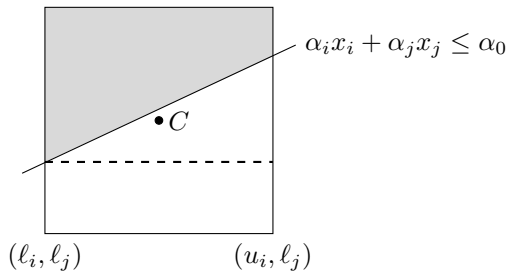

Lemma~\ref{lemma:center} implies that each inequality that is valid for $\projrelaxset$ can separate at
most one vertex of $\ijbox$ \revision{directly after \OBBT has been applied to $\xi$ and $\xj$}.
\revision{However, if tighter bounds from \OBBT are used to strengthen
  the linear relaxation $\relaxset$ further by, e.g., computing tighter convexifications for nonconvex constraints or propagating
  variables bounds via \FBBT, then the conditions in Lemma~\ref{lemma:center} may not be met anymore. For this reason, we solve~\eqref{eq:diaglp}
  immediately after \OBBT.}

Finally, we are able to define the polytope $\Pij$ by using the variable bounds $\ijbox$ and the derived
inequalities~\eqref{eq:projineq} for four choices of $\xibar$ and $\xjbar$, namely the four vertices of
$\ijbox$. Defining $\Pij$ like this has the advantage that it is described by at most eight inequalities and covers at
least half of the volume of $\projrelaxset$ as it is shown in the following section.

\begin{remark}\normalfont
  The problem of computing a facet-defining inequality for a projection of a polyhedron has been extensively studied in
  the literature.
  It corresponds to the ``project'' step in lift-and-project cuts~\cite{Balas1993,Balas2005}.
  The dual of \eqref{eq:diaglp_orig} is
  \begin{equation} \label{eq:duallp}
  \begin{aligned}
    \max \; & \beta - \alpha_i \centerp_i - \alpha_j \centerp_j, \\
    \text{s.t. } & \alpha_i e^\T_i + \alpha_j e^\T_j = \lambda^\T A_1, \\
            & 0 = \lambda^\T A_2, \\
            & \beta = \lambda^\T d, \\
            & \alpha_i(\xibar - \centerp_i) + \alpha_i(\xibar - \centerp_i) = 1, \\
            & \lambda \ge 0,
  \end{aligned}
  \end{equation}
  which can be interpreted as a cut generating linear program (CGLP) with the objective function of the so-called
  reverse polar CGLP~\cite[Chap.~2]{Serra2018} and the normalization constraint of Balas and
  Perregaard~\cite{Balas2002}. We refer to the thesis of Serra~\cite[Chap.~2]{Serra2018} for more details.
\end{remark}

\subsection{Volume bound}

We are interested in how much we lose by not computing the exact projection of the polyhedral relaxation $\relaxset$.
In the literature, the volume has been used as a measure for the strength of relaxations,
see~\cite{Lee1994,Speakman2017}. Following this line of thought, we provide a lower bound on the quotient of the volume
of $\projrelaxset$ and $\Pij$.

\begin{theorem}\label{theorem:volume}\normalfont
  \revision{
  Let $\relaxset$ be a relaxation of~\ref{eq:miqcpref} with $\lbi = \min \{\xi \mid (x,X) \in \relaxset \}$, $\ubi = \max \{\xi \mid (x,X) \in \relaxset \}$,
  $\lbj = \min \{\xj \mid (x,X) \in \relaxset \}$, $\ubj = \max \{\xj \mid (x,X) \in \relaxset \}$ for two variable indices $i,j \in \varindex$.
  Let $\projrelaxset = \{ (\xi,\xj) \mid (x,X) \in \relaxset \}$ be the two-dimensional projection of $\relaxset$ onto the $(\xi,\xj)$-space. Let
  $\Pij$ be a polytope that is given by the intersection of $\ijbox$ and~\eqref{eq:projineq} for the four choices
  $(\xibar,\xjbar) \in \{\lbi,\ubi\} \times \{\lbj,\ubj\}$. Then, the inequality
  \begin{equation*}
    \frac{\volume{\projrelaxset}}{\volume{\Pij}} \ge \frac{1}{2}
  \end{equation*}
  holds and the constant is best possible.
  }
\end{theorem}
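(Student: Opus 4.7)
I would begin by invoking Lemma~\ref{lemma:center} to fix $C\in\projrelaxset$, and by noting that, by construction of the LP~\eqref{eq:diaglp}, each inequality~\eqref{eq:projineq} is a supporting line of $\projrelaxset$ tangent at a point $p_v$ lying on the segment $[C,v]$. Moreover, by applying Lemma~\ref{lemma:center} to this supporting inequality, the only vertex of $\ijbox$ it can separate from $C$ is $v$ itself, so the four cut-off regions $\ijbox\setminus\Pij$ each contain exactly one corner of $\ijbox$ and can be shown to be pairwise disjoint in the interior.

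The main idea is to decompose $\ijbox$ by joining $C$ to its four vertices, producing four triangular sectors $\triangle_s$, one per side $s$ of $\ijbox$, each of volume $\volume{\ijbox}/4$. Inside $\triangle_s$ the polytope $\Pij$ is carved exactly by the two cut inequalities associated with the two corners $v_1,v_2$ at the base of $\triangle_s$, whereas $\triangle_s\cap\projrelaxset$ is a convex region containing $C$ and the two tangent points $p_{v_1},p_{v_2}$ on the oblique sides of $\triangle_s$, and by the OBBT hypothesis it also contains at least one point on the base of $\triangle_s$. The plan is to prove the sector-wise inequality
\[
  \volume{\triangle_s \cap \projrelaxset} \;\ge\; \tfrac{1}{2}\,\volume{\triangle_s \cap \Pij},
\]
and then sum over the four sectors to obtain $\volume{\projrelaxset}/\volume{\Pij}\ge 1/2$. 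For the sector-wise estimate I would reduce to a planar area computation: $\triangle_s\cap\Pij$ is $\triangle_s$ minus two corner sub-triangles cut by the tangent lines, while $\triangle_s\cap\projrelaxset$ contains the convex polygon spanned by $C$, $p_{v_1}$, a base-touching point of $\projrelaxset$, and $p_{v_2}$. Bounding the combined area of the two cut-off sub-triangles by the area of this inscribed polygon—using crucially that each cut line passes through $p_{v_i}\in[C,v_i]$ and separates only $v_i$ from the remaining three corners of $\ijbox$—is what yields the constant $\tfrac{1}{2}$.

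To show the constant cannot be improved, I would construct a parametric family of relaxations whose projections $\projrelaxset$ degenerate so that the inclusion $\projrelaxset\subseteq\Pij$ becomes tight up to the factor two, for instance by stretching $\projrelaxset$ into a very thin convex set aligned with a diagonal of $\ijbox$ while preserving the OBBT assumptions, and checking that $\volume{\Pij}/\volume{\projrelaxset}\to 2$ along the family. The hardest part of the whole argument is the sector-wise comparison: one must handle all configurations of the tangent points along the oblique sides of $\triangle_s$—including the degenerate cases where $p_{v_i}$ coincides with $C$ or where $\projrelaxset$ touches the base of $\triangle_s$ at an endpoint (a vertex of $\ijbox$) rather than in its interior—without losing the factor one-half in any single sector.
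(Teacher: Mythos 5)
Your high-level architecture---normalize, decompose the box around the center $\centerp$, compare an inscribed polygon of $\projrelaxset$ against the slack of $\Pij$, and exhibit a thin diagonal family for tightness---parallels the paper's proof, and your tightness construction is essentially the paper's parametric example. The gap is in the key quantitative step. From the two inclusions you set up ($Q:=\convexhull\{\centerp,p_{v_1},q,p_{v_2}\}\subseteq\triangle_s\cap\projrelaxset$ and $\triangle_s\cap\Pij=\triangle_s\setminus(T_1\cup T_2)$, with $T_1,T_2$ the two cut-off corner triangles), the inequality that would yield the sector-wise bound is $\volume{(\triangle_s\cap\Pij)\setminus Q}\le\volume{Q}$, i.e.
\begin{equation*}
  2\,\volume{Q}+\volume{T_1}+\volume{T_2}\;\ge\;\volume{\triangle_s}.
\end{equation*}
The inequality you announce instead, $\volume{T_1}+\volume{T_2}\le\volume{Q}$, is neither sufficient for this nor true. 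It already fails in the paper's extremal family: on the unit box with $\projrelaxset=\convexhull\{(0,0),(a/2,1-a/2),(1,1),(1-a/2,a/2)\}$, the bottom sector has $T_1=\emptyset$ (the corner $(0,0)$ is not cut), $\volume{T_2}=a^2/4\to 1/4$, while the only base point of $\projrelaxset$ is $q=(0,0)=p_{v_1}$, so $Q$ degenerates to a triangle of area $(1-a)/4\to 0$.

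Moreover, even the corrected sufficient condition fails if $Q$ is built from a single, arbitrarily chosen base point: take $\projrelaxset=\Pij=\{(\xi,\xj)\in[0,1]^2 \mid \xi/a+\xj\ge 1\}$ with $a=\sqrt{3}-1$. Only the corner $(0,0)$ is cut, $p_{v_1}=(c,c)$ with $c=a/(1+a)\approx 0.423$, $p_{v_2}=(1,0)$, and choosing $q=(1,0)$ gives $2\volume{Q}+\volume{T_1}\approx 0.232<1/4$, although the sector-wise ratio here equals $1$. So the inscribed polygon must be chosen with more care than ``$\centerp$, the two tangent points, and a base point.'' This is precisely where the paper's proof does its work: it inscribes an octagon spanned by one touching point $q^k$ on \emph{each} side of $\ijbox$ together with the four diagonal tangent points $p^k$, groups by corners rather than by sides, and compares the corner quadrilateral $\convexhull\{\centerp,q^k,p^k,q^{k+1}\}$ not with the cut-off triangles but with the two residual triangles of $\Pij$ lying \emph{outside} the octagon near that corner; the resulting inequality is then verified by an explicit two-case algebraic computation using $c=a_1b_1/(a_1+b_1)$ and $a_2,b_2\le 1$. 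Your sketch contains no analogue of that computation, and the one inequality it does commit to is false, so the argument as it stands does not establish the bound.
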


\begin{proof}
  Since the volume quotient is invariant with respect to scaling and translating, we assume that all variable bounds are
  $[0,1]$. By construction, $\Pij$ is a relaxation of $\projrelaxset$. Because the conditions of Lemma~\ref{lemma:center} are
  met, it follows that the center point $C = (1/2,1/2)$ belongs to $\projrelaxset$ and thus also to $\Pij$. Let $p^k \in \R^2$
  for $k \in \{1,2,3,4\}$ be the four intersection points between $\Pij$ and the line connecting the center $C$. By construction,
  these four points belong to the set $\projrelaxset$.

  First, we construct an example that shows that \revision{the constant is best possible.} Let $(0,0)$, $(0,1-a)$, $(a,1)$, $(1,1)$,
  $(1,a)$, and $(1-a,0)$ be the vertices of $\Pij$ and $p^1 = (0,0)$, $p^2 = (a/2,1-a/2)$, $p^3 = (1,1)$, $p^4 =(1-a/2,a/2)$
  the vertices of $\projrelaxset$ depending on a parameter $a \in [0,1]$. See Figure~\ref{fig:volume:sharp} for an illustration
  of the construction. It follows that $\volume{\Pij} = 1 - a^2$ and $\volume{\projrelaxset} = 1 - a$ holds. As a consequence,
  \begin{equation*}
    \frac{\volume{\projrelaxset}}{\volume{\Pij}} = \frac{1-a}{1-a^2} = \frac{1-a}{(1-a)(1+a)} = \frac{1}{1+a}
  \end{equation*}
  converges to $\frac{1}{2}$ for $a \rightarrow 1$. Note that for $a=1$ the volume quotient exists but the polytopes
  $\Pij$ and $\projrelaxset$ reduce to a single line.

  \revisionstart
  Now, we prove the inequality. Since $\projrelaxset$ is a subset of $\Pij$, it immediately follows that
  \begin{equation*}
    \volume{\Pij} = \volume{\projrelaxset} + \volume{\Pij \backslash \projrelaxset} \, .
  \end{equation*}
  The inequality $\volume{\Pij \backslash \projrelaxset} \le \volume{\projrelaxset}$ is enough to show
  \begin{equation*}
    \volume{\Pij} = \volume{\projrelaxset} + \volume{\Pij \backslash \projrelaxset} \le 2 \cdot \volume{\projrelaxset} \, ,
  \end{equation*}
  which proves the theorem. We still need to prove the following claim.
  \begin{claim} \normalfont \label{theorem:volume:claim}
    $\volume{\Pij \backslash \projrelaxset} \le \volume{\projrelaxset}$
  \end{claim}
  \begin{proof}
    Let $q^k \in \R^2$ for $k \in \{1,2,3,4\}$ be four points in $\projrelaxset$ such that each point touches a different
    side of the $\ijbox$ box. The left picture in Figure~\ref{fig:claim:proof} shows how the points are labeled. The set
    \begin{equation*}
      \Xij' := \convexhull \{q^1,p^1,q^2,p^2,q^3,p^3,q^4,p^4\}
    \end{equation*}
    is by construction a subset of $\Xij$. As $\volume{\Pij \backslash \Xij'} \le \volume{\Pij \backslash \Xij}$ and
    $\volume{\Xij'} \le \volume{\Xij}$, showing the claim for $\Xij'$ implies the result for $\Xij$.

    The set $\Xij'$ decomposes into the four regions
    \begin{equation*}
      R_k := \convexhull\{C,q^k,p^k,q^{k+1}\} \subseteq \Xij'
    \end{equation*}
    for $k \in \{1,2,3,4\}$, whereas $q^5 = q^1$. The set $\Pij \backslash \Xij$ decomposes
    into eight triangles that are adjacent to the regions, see the right picture of Figure~\ref{fig:claim:proof}. In
    the following, we show that the area of each $R_k$ is at least as big as the area of the two corresponding
    triangles, which proves the claim.

    Consider the region $R_1$ in the left-bottom corner. If $(a_1,0)^\T$ and $(0,b_1)^\T$ are the endpoints of the facet in
    $\Pij$ that contains $p^1 = (c,c)^\T$, then $c = a_1 \, b_1 / (a_1 + b_1)$. Note that the claim is true if $a_1 = 0$ or $b_1 = 0$
    because in this case the two adjacent triangles are empty. Let $q^1 = (a_2,0)^\T$ and $q^2 = (0,b_2)^\T$. The area of the triangle
    $\Delta_1 := \convexhull \{ (a_1,0)^\T, p^1, q^1\} \subseteq \Pij \backslash \Xij$ is
    \begin{equation*}
      \volume{\Delta_1} = \frac{c \, (a_2 - a_1)}{2}
    \end{equation*}
    and the area of the second triangle $\Delta_2 := \convexhull \{ (0,b_1)^\T, p^1, q^2\} \subseteq \Pij \backslash \Xij$ is
     \begin{equation*}
      \volume{\Delta_2} = \frac{c \, (b_2 - b_1)}{2} \, .
    \end{equation*}
    The area of the quadrilateral is given by the area of two triangles $\Delta_3 = \convexhull\{C,p^1,q^1\}$ and
    $\Delta_4 = \convexhull\{C,q^2,p^1\}$. Their areas are
    \begin{equation*}
      \volume{\Delta_3} = \frac{a_2}{4} - \frac{a_2 \, c}{2} = \frac{a_2 \,(1-2c)}{4}
    \end{equation*}
    and
    \begin{equation*}
      \volume{\Delta_4} = \frac{b_2}{4} - \frac{b_2 \, c}{2} = \frac{b_2 \,(1-2c)}{4} \, .
    \end{equation*}
    Finally, we show that the area of $\Delta_1$ and $\Delta_2$ is less or equal than the area of $\Delta_3$ and $\Delta_4$, which
    proves the claim. After algebraic manipulation, we get
    \begin{align}
      \volume{\Delta_1} + \volume{\Delta_2} &- \volume{\Delta_3} - \volume{\Delta_4} = \frac{c \, (a_2 - a_1)}{2} + \frac{c \, (b_2 - b_1)}{2} - \frac{a_2 \,(1-2c)}{4} - \frac{b_2 \,(1-2c)}{4} \nonumber \\
        &= \frac{-2 a_1^2 b_1 - 2 a_1 b_1^2 + (a_2 + b_2) (4 a_1 b_1-a_1-b_1)}{4 (a_1 + b_1)} \label{theorem:volume:claim:nominator}
    \end{align}
    where the second step used the definition of $c$, i.e., $c = a_1 b_1 / (a_1 + b_1)$. Since the denominator of~\eqref{theorem:volume:claim:nominator}
    is positive, showing that the nominator of~\eqref{theorem:volume:claim:nominator} is non-positive implies
    \begin{align*}
      \volume{\Delta_1} + \volume{\Delta_2} &- \volume{\Delta_3} - \volume{\Delta_4} \le 0 \, .
    \end{align*}
    We consider two cases.
    \begin{enumerate}
      \item[\textbf{Case 1:}] $4 a_1 b_1-a_1-b_1 \le 0$ \\
        The nominator of~\eqref{theorem:volume:claim:nominator} consists of three non-positive terms.
      \item[\textbf{Case 2:}] $4 a_1 b_1-a_1-b_1 > 0$ \\
        Since $a_2, b_2 \in [0,1]$, it follows that
        \begin{align*}
          -2 a_1^2 b_1 - 2 a_1 b_1^2 + (a_2 + b_2) (4 a_1 b_1-a_1-b_1) &\le -2 a_1^2 b_1 - 2 a_1 b_1^2 + 2 (4 a_1 b_1-a_1-b_1) \\
          &= 2 a_1 (2 b_1 - b_1^2 - 1) + 2 b_1 (2 a_1 - a_1^2 - 1) \\
          &= -2 a_1 (b_1 - 1)^2 - 2 b_1 (a_1 - 1)^2 \le 0 \, ,
        \end{align*}
        which proves that the nominator of~\eqref{theorem:volume:claim:nominator} is non-positive.
    \end{enumerate}
  \end{proof}
  \revisionend

\end{proof}

\begin{figure}[t]
  \centering
  \def\aval{0.5}
  \begin{tikzpicture}[scale=3]
    \draw (0,0) rectangle (1,1);

    \draw[fill=red,opacity=0.2] (0,0) -- (0,1-\aval) -- (\aval,1) -- (1,1) -- (1,\aval) -- (1-\aval,0) -- cycle;

    \draw[fill=black,opacity=0.2] (0,0) -- (0.5*\aval,1-0.5*\aval) -- (1,1) -- (1-0.5*\aval,0.5*\aval) -- cycle;

    \draw[dashed] (0,0) -- (1,1);
    \draw[dashed] (0,1) -- (1,0);

    \draw[fill=black](0.5,0.5) node[left]{$\centerp$} circle(0.5pt);

    \draw[fill=black](0,0) circle(0.5pt) node[below]{\small $p^1$};
    \draw[fill=black](0.5*\aval,1-0.5*\aval) circle(0.5pt) node[left]{\small $p^2$};
    \draw[fill=black](1,1) circle(0.5pt) node[above]{\small $p^3$};
    \draw[fill=black](1-0.5*\aval,0.5*\aval) circle(0.5pt) node[right]{\small $p^4$};

    \draw[fill=black](0,1-\aval) circle(0.5pt) node[left]{\tiny $\mvec{0}{1-a}$};
    \draw[fill=black](\aval,1) circle(0.5pt) node[above]{\tiny $\mvec{a}{1}$};
    \draw[fill=black](1,\aval) circle(0.5pt) node[right]{\tiny $\mvec{1}{a}$};
    \draw[fill=black](1-\aval,0) circle(0.5pt) node[below]{\tiny $\mvec{1-a}{0}$};

  \end{tikzpicture}
  \caption{\revision{Construction of a parametric example that shows that $\frac{\volume{\projrelaxset}}{\volume{\Pij}}$
    approaches $\frac{1}{2}$ when $a$ approaches $1$.} The gray region is $\projrelaxset$ and the red region is $\Pij$.}
  \label{fig:volume:sharp}
\end{figure}

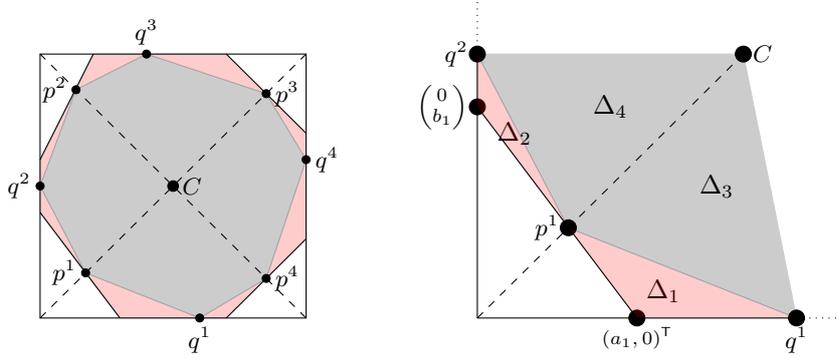
\begin{figure}
  \centering
  \begin{minipage}[t]{0.4\textwidth}
    \centering
    \begin{tikzpicture}[scale=3.5]

      \draw[fill=black,opacity=0.2] (0,0.5) -- (0.135,0.865) -- (0.4,1) -- (0.85,0.85) -- (1,0.6) -- (0.85,0.15) -- (0.6,0) -- (0.171,0.171) -- cycle;

      \draw[fill=red,opacity=0.2] (0.171,0.171) -- (0,0.5) -- (0,0.4) -- cycle;
      \draw[fill=red,opacity=0.2] (0,0.5) -- (0.135,0.865) -- (0,0.6) -- cycle;
      \draw[fill=red,opacity=0.2] (0.4,1) -- (0.2,1) -- (0.135,0.865) -- cycle;
      \draw[fill=red,opacity=0.2] (0.4,1) -- (0.85,0.85) -- (0.7,1) -- cycle;
      \draw[fill=red,opacity=0.2] (0.85,0.85) -- (1,0.6) -- (1,0.7) -- cycle;
      \draw[fill=red,opacity=0.2] (1,0.6) -- (1,0.3) -- (0.85,0.15) -- cycle;
      \draw[fill=red,opacity=0.2] (0.85,0.15) -- (0.6,0) -- (0.7,0) -- cycle;
      \draw[fill=red,opacity=0.2] (0.6,0) -- (0.3,0) -- (0.171,0.171) -- cycle;

      \draw (0,0) -- (0,1) -- (1,1) -- (1,0) -- cycle;

      \draw[dashed] (0,0) -- (1,1);
      \draw[dashed] (1,0) -- (0,1);
      \draw[fill=black] (0.5,0.5) circle (0.02) node[right]{\small $C$};

      \draw (0,0.4) -- (0.3,0);
      \draw[fill=black] (0.171,0.171) circle (0.015) node[left]{\small $p^1$};

      \draw (0.7,0) -- (1,0.3);
      \draw[fill=black] (0.85,0.15) circle (0.015) node[right]{\small $p^4$};

      \draw (0.7,1) -- (1,0.7);
      \draw[fill=black] (0.85,0.85) circle (0.015) node[right]{\small $p^3$};

      \draw (0,0.6) -- (0.2,1);
      \draw[fill=black] (0.135,0.865) circle (0.015) node[left]{\small $p^2$};

      \draw[fill=black] (0.6,0) circle (0.015) node[below]{\small $q^1$};
      \draw[fill=black] (0,0.5) circle (0.015) node[left]{\small $q^2$};
      \draw[fill=black] (0.4,1) circle (0.015) node[above]{\small $q^3$};
      \draw[fill=black] (1,0.6) circle (0.015) node[right]{\small $q^4$};

    \end{tikzpicture}
  \end{minipage}
  \begin{minipage}[t]{0.4\textwidth}
    \centering
    \begin{tikzpicture}[scale=7]
      \draw (0,0) -- (0.6, 0);
      \draw[dotted] (0.6,0) -- (0.7,0);
      \draw (0,0) -- (0, 0.5);
      \draw[dotted] (0,0.5) -- (0,0.6);
      \draw[dashed] (0,0) -- (0.5,0.5);

      \node[] (D1) at (0.35,0.05) {$\Delta_1$};
      \node[] (D2) at (0.07,0.35) {$\Delta_2$};
      \node[] (D3) at (0.45,0.25) {$\Delta_3$};
      \node[] (D4) at (0.25,0.4) {$\Delta_4$};

      \draw[fill=black] (0.5,0.5) circle (0.015) node[right]{\small $C$};
      \draw[fill=black] (0.6,0) circle (0.015) node[below]{\small $q^1$};
      \draw[fill=black] (0,0.5) circle (0.015) node[left]{\small $q^2$};
      \draw (0,0.4) -- (0.3,0);
      \draw[fill=black] (0.3,0) circle (0.015) node[below]{\scriptsize $(a_1,0)^\T$};
      \draw[fill=black] (0,0.4) circle (0.015) node[left]{\scriptsize $\mvec{0}{b_1}$};
      \draw[fill=black] (0.171,0.171) circle (0.015) node[left]{\small $p^1$};
      \draw[fill=black,opacity=0.2] (0.5,0.5) -- (0.6,0) -- (0.171,0.171) -- (0,0.5) -- cycle;
      \draw[fill=red,opacity=0.2] (0.171,0.171) -- (0,0.5) -- (0,0.4) -- cycle;
      \draw[fill=red,opacity=0.2] (0.6,0) -- (0.3,0) -- (0.171,0.171) -- cycle;
    \end{tikzpicture}
  \end{minipage}
  \caption{\revision{Construction of $\Xij'$ in the proof of Claim~\ref{theorem:volume:claim}. The inner polytope is $\Xij'$ (gray) is inscribed in
  $\Pij$. The main idea of the proof is that $\Pij \backslash \Xij$ (red) has smaller area than $\Xij$. The right picture illustrates that
  the region $R_1 = \Delta_3 \cup \Delta_4$ is larger than the two adjacent triangles $\Delta_1$ and $\Delta_2$.}}
  \label{fig:claim:proof}
\end{figure}

\begin{remark}\normalfont \label{remark:volume}
  \revision{The construction of the parametric example in the proof of Theorem~\ref{theorem:volume} requires that $\Pij$ contains
  two facets that are not axis-parallel.} If only one facet of $\Pij$ is not axis-parallel, the volume quotient is bounded by
  $\frac{2+\sqrt{2}}{4} \approx 0.85$.
\end{remark}

Theorem~\ref{theorem:volume} and Remark~\ref{remark:volume} provide some
theoretical justification \revision{why it suffices to compute} a relaxation of
the projection. From a practical point of view, spending more time in computing $\projrelaxset$ exactly might not pay off because we are only
projecting a relaxation of the feasible region.

\subsection{Computational aspects}

So far, we have only considered a single term $\xi\xj$, but in general~\eqref{eq:miqcpref} contains up to $O(\nvars^2)$ many
bilinear terms. With growing number of variables, it may become computationally too expensive to solve~\eqref{eq:diaglp} for all bilinear terms.
In order to save unnecessary solves of $\diaglp{\xibar, \xjbar}$, we observe the following: \revision{The existence of a feasible solution
$(x^*,X^*) \in \relaxset$ in which $(\xi^*,\xj^*)$ is} a vertex of $\ijbox$ proves that no useful inequality for $\Pij$ can be
found that cuts off $(\xi^*,\xj^*)$. This observation is similar to the bound filtering in the branch-and-contract
algorithm~\cite{Zamora1999} and can be exploited as an aggressive filtering strategy, as it has been done in
\OBBT~\cite{Gleixner2017}.
\revision{The idea of bound filtering is to use a solution $(x^*,X^*)$ of an \LP relaxation $\relaxset$ and to
  check for which variables $\xi$ the solution value $\xi^*$ is equal to $\lbi$
  or $\ubi$. If $\xi^* = \lbi$ ($\xi^* = \ubi$) holds then \OBBT cannot find a tighter
  lower (upper) bound for $\xi$. In addition to considering solutions from previous \OBBT \LPs, aggressive bound filtering
  solves auxiliary \LPs with an objective function $v^\T x$ for a vector $v \in \{-1,0,1\}^\nvars$ to push as many unfiltered variables
  as possible to their lower or upper bounds. We refer to~\cite{Gleixner2017} for more details.}

In the following, we present the generic Algorithm~\ref{algo:projections} that first applies \OBBT to ensure that the
center point $(\centerp_i,\centerp_j)$ belongs to $\projrelaxset$ and afterwards computes a relaxation of
$\projrelaxset$ as discussed above.

\begin{algorithm}[t]
  \caption{Two-dimensional projections}
  \label{algo:projections}
  \begin{algorithmic}[1]
  \REQUIRE{linear relaxation $\relaxset = \{(x,X) \mid A_1 x + A_2 X \le d\}$ of~\eqref{eq:miqcpref}}
  \ENSURE{a list $\mathcal{P}$ of two-dimensional polytopes for each bilinear term $\xi\xj$}
  \STATE{$K \leftarrow \{(i,j) \in \varindex \times \varindex \mid i < j \land \exists k \in \consindex: (Q_k)_{ij} \neq 0\}$} \label{algo:projections:terms} \COMMENT{collect bilinear terms}
  \STATE{$\mathcal{P} \leftarrow \emptyset$, \revision{$F \leftarrow \emptyset$}}
  \FOR[call \OBBT]{$i \in \varindex : \exists j \in \varindex$ such that $(i,j) \in K$}
    \STATE{$\ibox \leftarrow$ apply \OBBT on $\xi$; let $x^*$ be the \OBBT \LP solution} \label{algo:projections:obbt}
    \STATE{\revision{$F \leftarrow F \, \cup \ \left\{ (i',j',x_{i'}^*,x_{j'}^*) \mid (i',j') \in K \land x_{i'}^* \in \{\lb_{i'},\ub_{i'}\} \land x_{j'}^* \in \{\lb_{j'},\ub_{j'}\} \right\}$}} \label{algo:projections:filter1}
  \ENDFOR
  \FOR{$(i,j) \in K$}
    \STATE{$\Pij \leftarrow \ijbox$}
    \FOR[iterate through all vertices]{$(\xibar, \xjbar) \in \{\lb_i,\ub_i\} \times \{\lb_j,\ub_j\}$}
      \IF[consider unfiltered candidates]{$(i,j,\xibar,\xjbar) \not\in F$} \label{algo:projections:unfiltered}
      \STATE{$(x^*,X^*,\lambda^*,\mu^*) \leftarrow $ solve $\diaglp{\xibar}{\xjbar}$} \label{algo:projections:lp}
        \STATE{\revision{$F \leftarrow F \, \cup \ \left\{ (i',j',x_{i'}^*,x_{j'}^*) \mid (i',j') \in K \land x_{i'}^* \in \{\lb_{i'},\ub_{i'}\} \land x_{j'}^* \in \{\lb_{j'},\ub_{j'}\} \right\}$}} \label{algo:projections:filter2}
        \IF[create linear inequality]{$\xi^* \neq \xibar \land \xj^* \neq \xjbar$}
          \STATE{extract valid inequality~\eqref{eq:projineq} from dual solution $(\lambda^*,\mu^*)$} \label{algo:projections:inequality}
          \STATE{\revision{$\Pij \leftarrow \Pij \, \cap \, \{ (\xi,\xj) \mid \eqref{eq:projineq} \text{ holds} \}$}}
        \ENDIF
      \ENDIF
    \ENDFOR
    \STATE{\revision{add $\Pij$ to $\mathcal{P}$}}
  \ENDFOR
  \RETURN{$\mathcal{P}$}
  \end{algorithmic}
\end{algorithm}

In Line~\ref{algo:projections:terms}, Algorithm~\ref{algo:projections} computes an index set for all occurring bilinear
terms. \OBBT is called in Line~\ref{algo:projections:obbt} for each variable that appears bilinearly to ensure that the
requirements of Lemma~\ref{lemma:center} are met. Afterward, in Line~\ref{algo:projections:lp}, for each term $\xi\xj$
the algorithm considers all vertices $(\xibar,\xjbar)$ of $\ijbox$ and solves $\diaglp{\xibar}{\xjbar}$. The result is a
primal-dual optimal solution $(x^*,X^*,\lambda^*,\mu^*)$ that is used in Line~\ref{algo:projections:inequality} for
generating a valid inequality for $\projrelaxset$. \revision{The \LP solutions from Line~\ref{algo:projections:obbt}
and~\ref{algo:projections:lp} are used to update the set of filtered candidates $F$ in Line~\ref{algo:projections:filter1}
and~\ref{algo:projections:filter2}. In Line~\ref{algo:projections:unfiltered}, a candidate $(\xi,\xj)$ for the direction
$(\xibar,\xjbar) \in \{\lb_i,\ub_i\} \times \{\lb_j,\ub_j\}$ is only considered if $(i,j,\xibar,\xjbar)$ has not been filtered out.}

In our implementation, all bilinear terms are ordered by how often they appear in different constraints of the original
\MIQCP. As a tie-break, we use the term $\xi\xj$ for which the volume of $\ijbox$, i.e.,
$(\ub_i - \lb_i)(\ub_j - \lb_j)$ is maximized.

Algorithm~\ref{algo:projections} could either solve $\diaglp{\xibar}{\xjbar}$ or its dual formulation~\eqref{eq:duallp} for
deriving the two-dimensional projections. However, solving $\diaglp{\xibar}{\xjbar}$ has two technical advantages:
\begin{enumerate}
\item The linear relaxation $\relaxset$ is available in \LP-based spatial branch-and-bound solvers and only needs to be
  extended by a single linear equality constraint for solving $\diaglp{\xibar}{\xjbar}$. This is beneficial compared to
  constructing~\eqref{eq:duallp} for a relaxation that contains many variables and constraints.
\item Due to the close connection to \OBBT, it is possible to warm start from a previously computed basis of an
  \OBBT-\LP. This would require to restructure Algorithm~\ref{algo:projections} in a way that it solves
  $\diaglp{\xibar}{\xjbar}$ after the bounds of $\xi$ and $\xj$ have been tightened by \OBBT. \revision{However,
  restoring a previous \LP basis causes a significant overhead that cannot be compensated by the warm start capabilities
  of the \LP solver. For this reason, our implementation of Algorithm~\ref{algo:projections} does not utilize a previously
  computed \LP basis.}
\end{enumerate}

After computing inequalities of the form~\eqref{eq:projineq}, we apply Locatelli's algorithm to strengthen the linear
relaxation of $\Xij = \xi\xj$ through separation during the tree search. Moreover, the computed $\Pij$ can not
only be used to improve separation but also to strengthen variable bounds of $\Xij$, $\xi$, and $\xj$, as shown in the
next section.

\section{Using 2D projections for propagation}
\label{section:propagation}

Tight variable bounds are crucial when computing linear (or convex) relaxations for \MIQCPs during spatial
branch-and-bound. Stronger bounds on $\xi$, $\xj$, and $\Xij$ not only affect the relaxation of $\Xij = \xi\xj$ but also
other constraints that involve these variables, including linear constraints. Propagating these constraints in turn might lead to further bound
reductions of variables that appear in other nonconvex constraints~\cite{BelottiCafieriLeeLiberti2012TR,Puranik2017} and subsequently result in tighter relaxations.

In the following, we show how to use a two-dimensional projection $\Pij$ to derive tighter bounds on $\xi$, $\xj$, and
$\Xij$ by solving nonconvex optimization problems that can be efficiently solved.

\subsection{Forward propagation}

Given a polytope $\Pij $, the best possible lower/upper bound for $\Xij$ on $\Pij$ is given by
\begin{equation} \label{eq:propagation:polytope}
  \min / \max \{ \Xij \mid \Xij = \xi \xj, \, (\xi,\xj) \in \Pij \},
\end{equation}
which is a nonconvex optimization problem. Denote by $F(\Pij)$ the facets of $\Pij$, and let
\begin{equation*}
  C(\Pij) := \left\{ \argmax_{(\xi,\xj) \in F} \xi\xj \mid F \in F(\Pij) \right\} \cup \left\{ \argmin_{(\xi,\xj) \in F} \xi\xj \mid F \in F(\Pij) \right\}
\end{equation*}
be the set of \emph{optimal points} for maximizing and minimizing $\xi \xj$ over each facet of $\Pij$.
For example, if $F = \{ (\xi,\xj) \in \ijbox \mid a_i \xi + a_j \xj = a_0 \}$ is a facet of $\Pij$ with $a_i \neq 0$ and
$a_j \neq 0$, then $\xi\xj$ restricted to $F$ is $-\frac{a_i}{a_j} \xi^2 + \frac{a_0}{a_j} \xi$. The critical point of
this function is $\frac{a_0}{2a_i}$. Thus $(\frac{a_0}{2a_i},\frac{a_0}{2a_j}) \in C(\Pij)$ if and only if
$\frac{a_0}{2a_j} \in \varbounds{i}$ and $\frac{a_0}{2a_i} \in \varbounds{j}$. Otherwise, both vertices of $F$ belong to
$C(\Pij)$.

See Figure~\ref{fig:propagation:inteval}
for an illustration of the points $C(\Pij)$. The following theorem shows that~\eqref{eq:propagation:polytope} can be
solved by computing the minimum / maximum on the discrete set $C(\Pij)$.

\begin{figure}[t]
  \centering
  \includegraphics[width=0.5\textwidth]{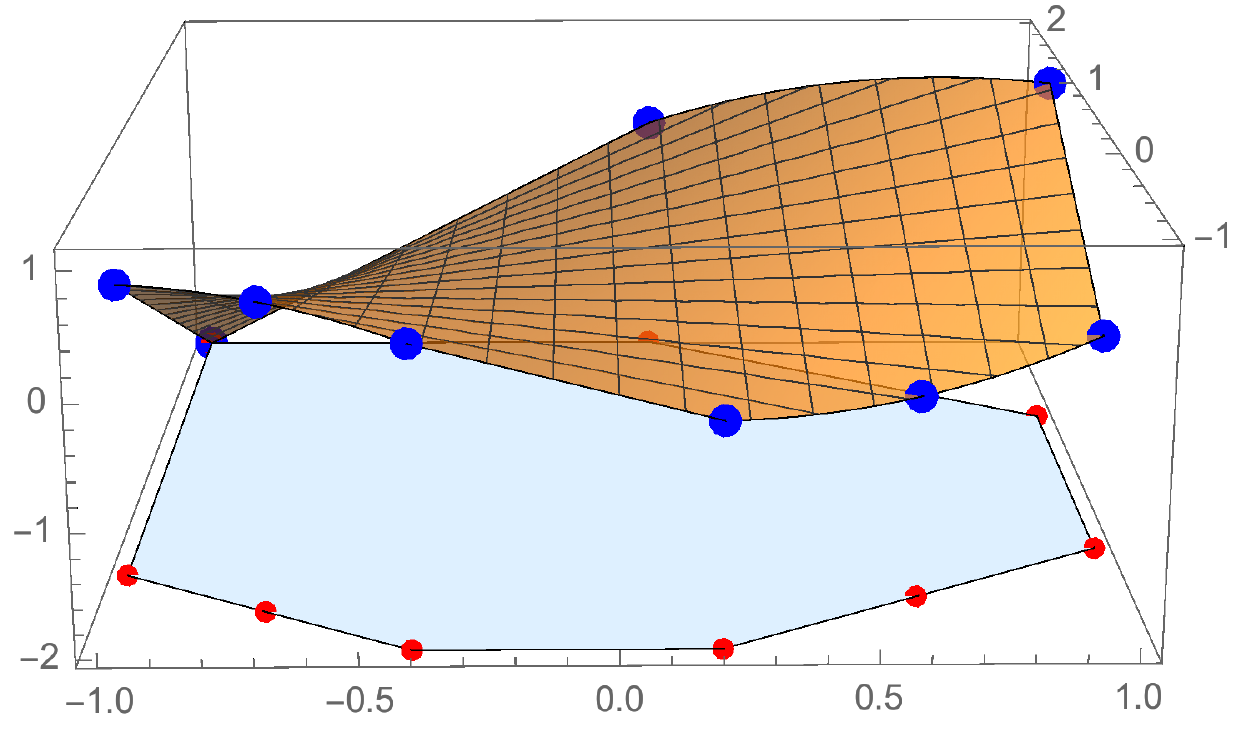}
  \caption{An example of how to compute the minimum and maximum of $\xi \xj$ on $\Pij \subset [-1,1] \times [-1,2]$. The
    red points are the points in $C(\Pij)$.}
  \label{fig:propagation:inteval}
\end{figure}

\begin{theorem}\normalfont\label{theorem:propagation:Xij}
  Let $\Pij \subset \R^2$ be a polytope and let $C(\Pij)$ be the optimal points of $\Pij$. Then the equality
  \begin{equation*}
    \min \{ \alpha \xi \xj \mid (\xi,\xj) \in \Pij \} = \min \{ \alpha \xi \xj \mid (\xi,\xj) \in C(\Pij) \}
  \end{equation*}
  holds for $\alpha \in \{-1,1\}$.
\end{theorem}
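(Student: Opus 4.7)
The plan is to reduce the two-dimensional nonconvex optimization to a finite comparison by exploiting two structural properties of $\alpha \xi \xj$: it has an indefinite Hessian, and its restriction to any line is at most quadratic. Both statements then follow by analyzing the function on the boundary of $\Pij$ facet by facet.

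First I would argue that the extremum of $\alpha \xi \xj$ over the (compact) polytope $\Pij$ must lie on $\partial \Pij$. The Hessian of $\alpha \xi \xj$ is $\alpha \begin{pmatrix}0 & 1\\ 1 & 0\end{pmatrix}$, whose eigenvalues are $\pm\alpha$, so it is indefinite and the function has no interior local minimum (nor maximum) on the open set $\mathrm{int}(\Pij)$. Together with continuity and compactness of $\Pij$, every minimizer (and maximizer) must therefore lie in the boundary, i.e., on some facet $F \in F(\Pij)$. Hence
\begin{equation*}
  \min \{ \alpha \xi \xj \mid (\xi,\xj) \in \Pij \} = \min_{F \in F(\Pij)} \min \{ \alpha \xi \xj \mid (\xi,\xj) \in F \}.
\end{equation*}

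Next I would analyze the single-facet minimization. Fix a facet $F$ and parametrize the line segment $F$ by $t \in [0,1]$. The restriction $t \mapsto \alpha \xi(t) \xj(t)$ is a univariate polynomial of degree at most two. If $F$ is axis-parallel, it is linear (or constant), so its extrema are attained at the two endpoints of $F$, which are vertices of $\Pij$. Otherwise, say $F = \{(\xi,\xj)\in\ibox\times\jbox \mid a_i \xi + a_j \xj = a_0\}$ with $a_i, a_j \neq 0$; then the restriction is a quadratic in $\xi$ (or in $\xj$) whose unique critical point is the one identified in the paragraph preceding the theorem. Either that critical point lies in $F$ (in which case it is included in $C(\Pij)$ by definition), or it does not, in which case the minimum over $F$ is attained at a vertex of $F$, and by the second case in the definition of $C(\Pij)$, both vertices of $F$ are placed in $C(\Pij)$.

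Combining the two ingredients, for each facet the minimum of $\alpha \xi \xj$ on $F$ is attained at a point that belongs to $C(\Pij)$, which gives
\begin{equation*}
  \min \{ \alpha \xi \xj \mid (\xi,\xj) \in \Pij \} \ge \min \{ \alpha \xi \xj \mid (\xi,\xj) \in C(\Pij) \}.
\end{equation*}
The reverse inequality is immediate since $C(\Pij) \subseteq \Pij$. This proves the claim for $\alpha = -1$; the case $\alpha = 1$ is symmetric, and can also be seen by noting that maximizing $\xi \xj$ is minimizing $-\xi \xj$ and that $C(\Pij)$ is defined symmetrically with respect to taking argmax and argmin on each facet.

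I do not expect a real obstacle; the only subtlety is the careful bookkeeping of the cases (axis-parallel facets, interior-critical-point-of-facet versus not), and making sure the definition of $C(\Pij)$ really covers all candidate minimizers, which it does precisely because $\argmin$/$\argmax$ over a segment is either the interior critical point (when feasible) or an endpoint of the segment.
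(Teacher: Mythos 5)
Your proof is correct and follows essentially the same route as the paper's: the paper likewise observes that bilinearity (an indefinite Hessian) forces every extremum to the boundary and that the restriction to each facet, being a univariate quadratic, attains its extrema at the points collected in $C(\Pij)$. You simply supply the case analysis (axis-parallel facets, interior critical point versus endpoints) that the paper leaves implicit.
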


\begin{proof}
  First, due to the fact that $\xi\xj$ is bilinear, the minimum and maximum must be attained at the boundary of
  $\Pij$. Restricted to a facet, $\xi \xj$ achieves its maximum and minimum at a point in $C(\Pij)$.
\end{proof}

By construction, $\Pij$ has at most four facets that are not axis-parallel. This bounds the number of points in
$C(\Pij)$ by 12. Computing these points requires only simple algebraic computations as illustrated in the example above.

\subsection{Reverse propagation}

There are two ways to obtain tighter variable bounds for $\xi$ and~$\xj$ by utilizing $\Pij$. First, after branching on
$\xi$ or $\xj$, it is possible that a facet of $\Pij$ cuts off two vertices of the rectangular domain for some locally
valid bounds. \revision{This implies that at least one of the variable bounds of $\xi$ or $\xj$ can be tightened.}

Second, the bounds of $\Xij$ define a level set for the bilinear term $\xi \xj$. Intersecting the level set with $\Pij$
might imply tighter lower and upper bounds on $\xi$ and $\xj$. Even though the intersection is in general a nonconvex
region, we show that the best possible variable bounds that are implied by the intersection can be computed by
considering a finite set of points.

In the following, we give more details on the two possible types of bound reductions for $\xi$ and $\xj$.

\paragraph{Branching reductions}

Even though the facets of $\Pij$ are valid and redundant inequalities for the relaxation $\relaxset$ that has been used
for computing $\Pij$, they are still useful for deriving bound reductions on $\xi$ and $\xj$ during the tree
search. Figure~\ref{fig:propagation:branching} shows that $\Pij$ implies tighter bounds on $\xj$ after branching on
$\xi$. Note that optimizing $\pm \xj$ over $\relaxset$ leads to bounds that are at least as tight as the bounds implied
by $\Pij$. However, finding these bounds either requires solving an expensive \OBBT-LP or propagating several linear
constraints with \FBBT. The strength of using $\Pij$ together with variable bound changes due to branching is that the
facets of $\Pij$ contain information of multiple inequalities of $\relaxset$ and are computationally cheap to
propagate. Using the facets of $\Pij$ in this fashion is very similar to the so-called Lagrangian Variable Bounds of
Gleixner et al~\cite{GleixnerWeltge2013}, which are aggregations of linear constraints that are learned during \OBBT and
used as a computationally cheap approximation for \OBBT during the tree search.

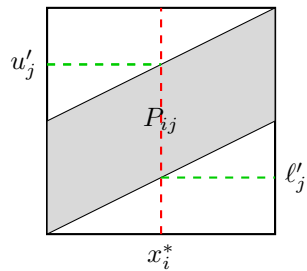
\begin{figure}[t]
  \centering
  \begin{tikzpicture}
    \draw[thick] (0,0) rectangle (3,3);
    \draw[fill=black!15!white] (0,0) -- (0,1.5) -- (3,3) -- (3,1.5) -- (0,0);
    \node[] (xi) at (1.5,-0.3){$\xi^*$};
    \node[] (Pij) at (1.5,1.5){$\Pij$};

    \draw[thick,dashed,color=red] (1.5,0) -- (1.5,3);
    \draw[thick,dashed,color=green!80!black] (0,2.25) node[left,color=black]{$\ub_j'$} -- (1.5,2.25);
    \draw[thick,dashed,color=green!80!black] (1.5,0.75) -- (3.0,0.75) node[right,color=black]{$\lb_j'$};
    \draw[thick,dashed,color=green!80!black] (0,2.25) -- (1.5,2.25);
  \end{tikzpicture}
  \caption{
    \revision{The example shows that the upper bound of $\xj$ can be improved after using $\xi^*$ as branching
    point for $\xi$.} The vertical red line is the branching point of $\xi$. The horizontal green lines correspond to a tighter lower bound
    $\lb_j'$ and a tighter upper bound $\ub_j'$ in both subproblems, respectively.}
  \label{fig:propagation:branching}
\end{figure}

\paragraph{Level set reductions}

Let $[\lb_{ij},\ub_{ij}]$ be bounds on $\Xij$ such that
\begin{equation*}
  \lb_{ij} > \min \{ \xi\xj \mid (\xi,\xj) \in \Pij \}
\end{equation*}
or
\begin{equation*}
  \ub_{ij} < \max \{ \xi\xj \mid (\xi,\xj) \in \Pij \}
\end{equation*}
holds. This means that the bounds on $\Xij$ are not implied by $\xi\xj$ on $\Pij$. The best possible lower/upper bound
on $\xi$ (and analogous for $\xj$) using $\Pij$ and the bounds $[\lb_{ij},\ub_{ij}]$ is given by
\begin{equation}\label{eq:propagation:levelset}
  \min / \max \left\{ \xi \mid \lb_{ij} \le \xi \xj \le \ub_{ij} , \, (\xi,\xj) \in \Pij \right\},
\end{equation}
which is a nonconvex optimization problem. Figure~\ref{fig:reverseprop} illustrates that intersecting the level set of
$\xi\xj$ and $\Pij$ can imply stronger bounds on $\xi$ and $\xj$. Similar to Theorem~\ref{theorem:propagation:Xij}, we
show that~\eqref{eq:propagation:levelset} can be efficiently solved by scanning a finite set of points. Let
$IP \subseteq \Pij$ consist of the vertices of $\Pij$ that satisfy $\xi\xj \in [\lb_{ij},\ub_{ij}]$, and the
intersection points of each facet of $\Pij$ with $\{(\xi,\xj) \mid \xi\xj = \lb_{ij}\}$ or
$\{(\xi,\xj) \mid \xi\xj = \ub_{ij}\}$. In other words, $IP$ is the set of feasible points for which at least two
constraints of~\eqref{eq:propagation:levelset} are active.  Since the vertices \revision{and facets of $\Pij$} are explicitly given, computing points
in $IP$ reduces to solving a univariate quadratic equation.

The following theorem shows that it suffices to consider the points in $IP$ to solve~\eqref{eq:propagation:levelset}.

\begin{theorem}\normalfont\label{theorem:levelset}
  Let $\xi\xj$ a bilinear term, $[\lb_{ij},\ub_{ij}]$ bounds on $\xi\xj$, and $\Pij \subseteq \R^2$ a polytope. Then the equality
  \begin{equation*}
    \min \left\{ \alpha \xi \mid \lb_{ij} \le \xi \xj \le \ub_{ij} , \, (\xi,\xj) \in \Pij \right\} = \min \left\{ \alpha \xi \mid (\xi,\xj) \in IP \right\}
  \end{equation*}
  holds for $\alpha \in \{-1,1\}$.
\end{theorem}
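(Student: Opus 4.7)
The feasible region $R := \Pij \cap \{(\xi,\xj) : \lb_{ij} \le \xi\xj \le \ub_{ij}\}$ is the intersection of a compact polytope with a closed sublevel set of the continuous function $(\xi,\xj) \mapsto \xi\xj$, hence compact, so $\min\{\alpha\xi : (\xi,\xj) \in R\}$ is attained at some $(\xi^*, \xj^*)$. My plan is a two-case sliding argument showing that the same minimum value is attained at a point of the finite set $IP$. Since the problem is two-dimensional with a linear objective, I would split on which of the defining constraints (facets of $\Pij$, and the level-set inequalities $\xi\xj \ge \lb_{ij}$, $\xi\xj \le \ub_{ij}$) are active at $(\xi^*,\xj^*)$.

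First, suppose that no level-set constraint is active, i.e.\ $\lb_{ij} < \xi^*\xj^* < \ub_{ij}$. Then $R$ agrees with $\Pij$ in a neighborhood of the optimum, so $\alpha\xi$ is being minimized over a polytope locally. Because the objective is linear, I can slide along a facet of $\Pij$ in a direction that does not increase $\alpha\xi$ until I either reach a vertex of $\Pij$ still satisfying both level-set bounds (which lies in $IP$ by definition), or I hit one of the level curves $\xi\xj = \lb_{ij}$ or $\xi\xj = \ub_{ij}$. In the latter case, the resulting point lies simultaneously on a facet of $\Pij$ and a level curve, hence in $IP$.

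Next, suppose a level-set constraint is active, say $\xi^*\xj^* = \ub_{ij}$ (the case $\xi^*\xj^* = \lb_{ij}$ is symmetric). The connected component of $\{\xi\xj = \ub_{ij}\} \cap \Pij$ containing $(\xi^*,\xj^*)$ is a smooth arc on which $\xj$ is a monotone function of $\xi$ (on each branch of the hyperbola, or along the axes if $\ub_{ij}=0$), so $\alpha\xi$ varies monotonically along the arc. I slide toward decreasing $\alpha\xi$; feasibility in $R$ is preserved since $\xi\xj = \ub_{ij} \ge \lb_{ij}$ holds identically on the arc and $\Pij$ is convex. The slide terminates when the arc meets a facet of $\Pij$, giving a point of $IP$. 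If instead the slide meets $\xi\xj = \lb_{ij}$, then $\lb_{ij} = \ub_{ij}$, $R$ degenerates to a subset of a single level curve, and the same facet-intersection argument applies.

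The step I expect to require the most care is the sliding along the hyperbola: one must check that the arc stays inside the convex set $\Pij$ up to the first facet it meets and that the monotone direction of decrease of $\alpha\xi$ along the arc is genuinely feasible. The branches of the hyperbola lie in a single quadrant (or along an axis when $\ub_{ij}=0$) so the parametrization $\xj = \ub_{ij}/\xi$ (or the axis parametrization) makes both claims routine. Combining the two cases yields $\min\{\alpha\xi : (\xi,\xj) \in R\} = \min\{\alpha\xi : (\xi,\xj) \in IP\}$ for $\alpha \in \{-1, 1\}$, which is the claim.
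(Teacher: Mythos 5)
Your proof is correct and follows essentially the same strategy as the paper's: starting from an optimal point with a single active constraint, slide to an equally good feasible point at which a second constraint becomes active, which by definition lies in $IP$. The only real difference is the sliding direction\dash the paper moves along the objective-preserving vertical line $\xi = \xi^*$ after observing that the lone active constraint must locally support $\{\xi \ge \xi^*\}$ (forcing a vertical facet, or $\xi^* = 0$ on the level curve), whereas you move along the active facet or hyperbola arc in a non-increasing objective direction\dash but both arguments rest on the same case split and reach the same conclusion.
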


\begin{proof}
    We only prove the theorem for the objective function $\xi$ since $-\xi$ is analogous.
    Let $\xi^*$ be the optimal value.
    As the objective function is linear, every optimum is at the boundary.
    Therefore, at least one constraint it active.
    We will show that there is at least one optimum for which at least two constraints are active, i.e., is in $IP$.
    Let $(\xi^*, \xj^*)$ be any optimal point.

    If the only active constraint is linear, then it must be $x_i \geq \xi^*$.
    Since the feasible region is bounded, there is an $M > 0$ such that $(\xi^*, \xj^* + M)$ is infeasible.
    Therefore, for some $\xj \in [\xj^*, \xj^* + M]$, $(\xi^*, \xj)$ is active for at least two constraints.

    If the only active constraint is nonlinear, say, $\xi \xj = \phi$ with $\phi \in \{\lb_{ij},
    \ub_{ij}\}$, then the region $\{(\xi, \xj) : \xi \xj = \phi\}$, in a neighborhood of $(\xi^*, \xj^*)$, must be
    contained in $\xi \geq \xi^*$.
    This can only happen when \revision{$\xi^* = 0$} and the same argument as above shows that there is an $\xj$ such that $(\xi^*,
    \xj)$ is active for another constraint.
\end{proof}

\begin{figure}[t]
  \centering
  \begin{minipage}{0.3\textwidth}
    \centering
    \includegraphics[width=0.7\textwidth]{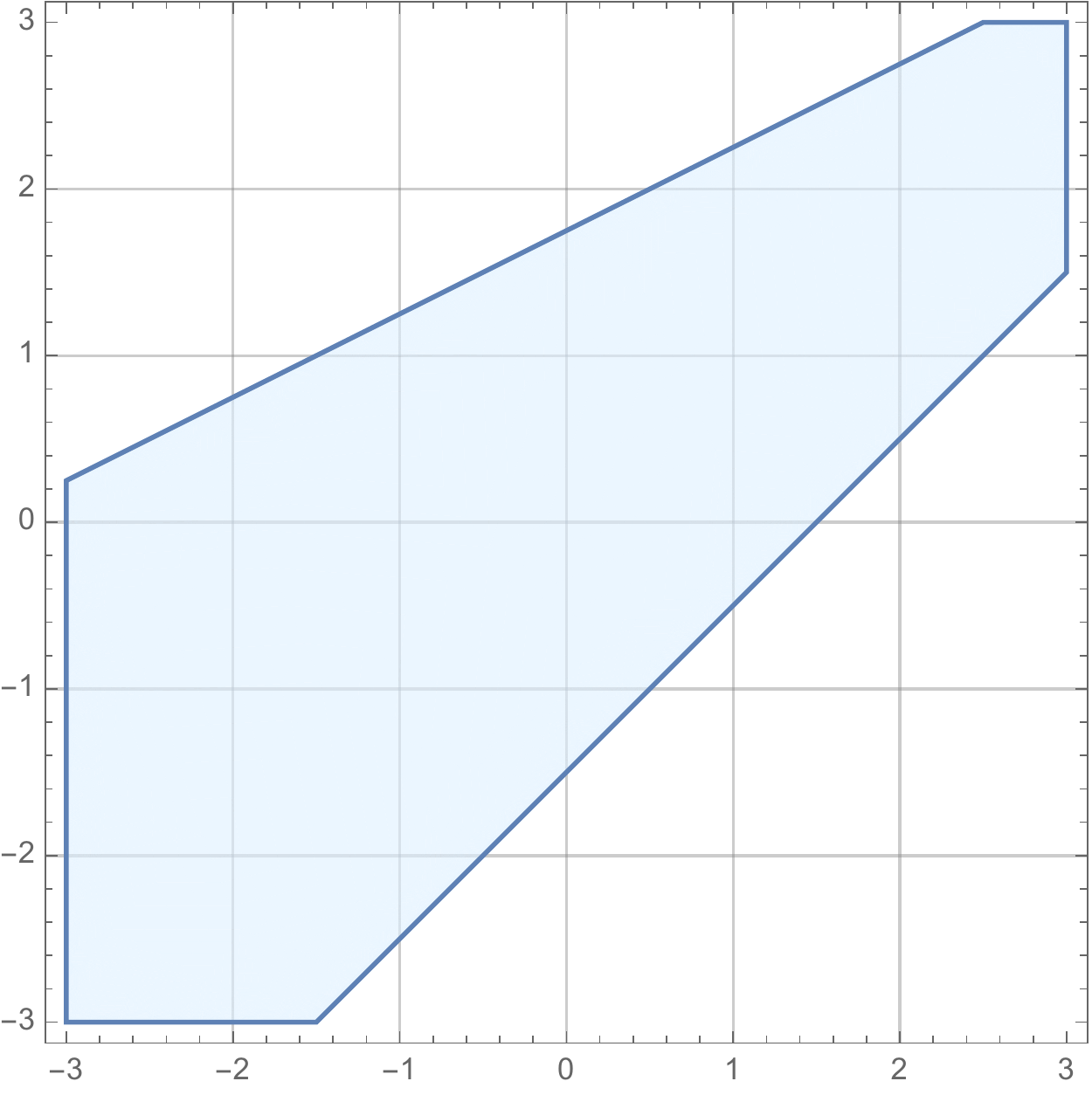}
  \end{minipage}
  \begin{minipage}{0.3\textwidth}
    \centering
    \includegraphics[width=0.7\textwidth]{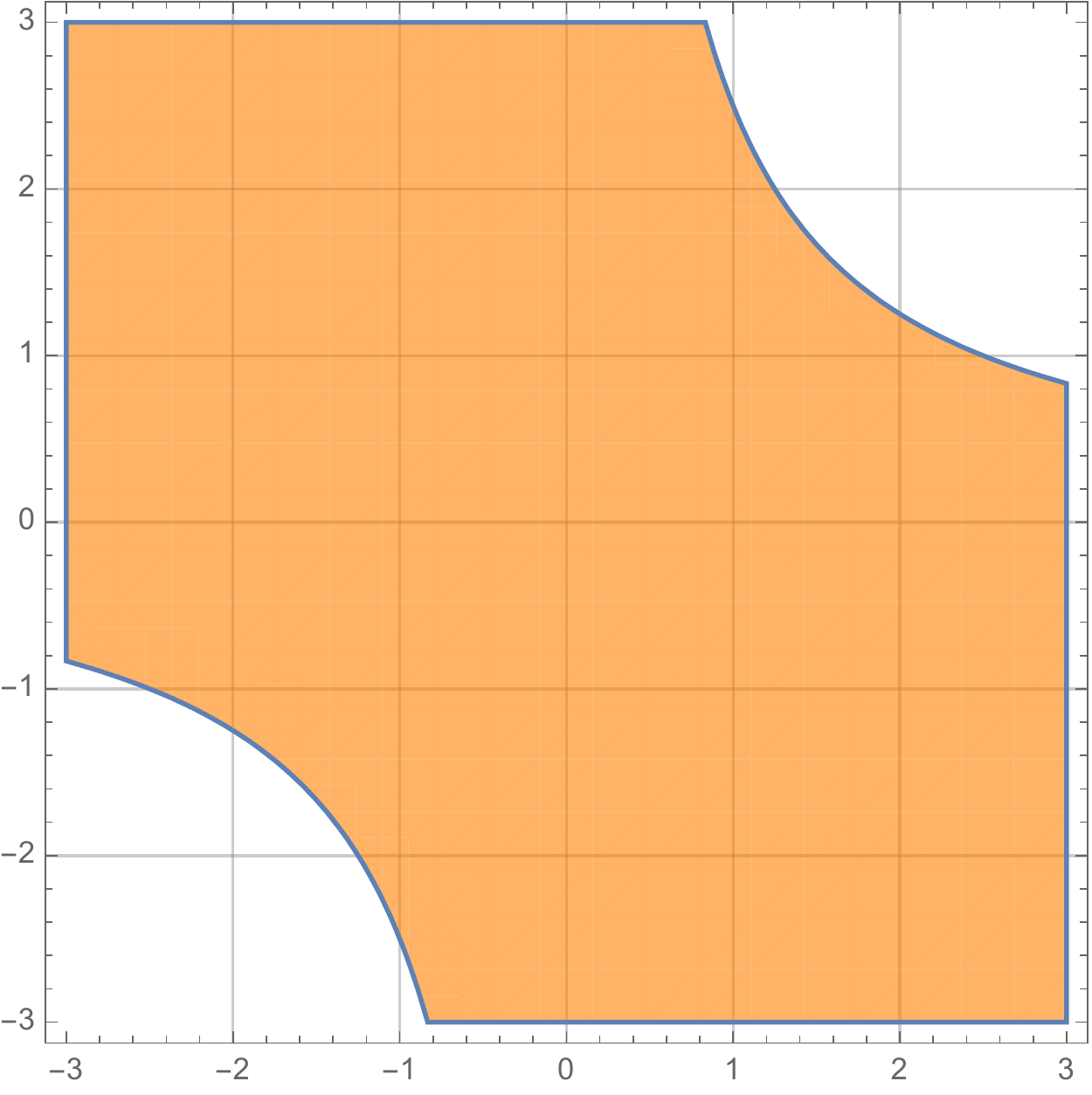}
  \end{minipage}
  \begin{minipage}{0.3\textwidth}
    \centering
    \includegraphics[width=0.7\textwidth]{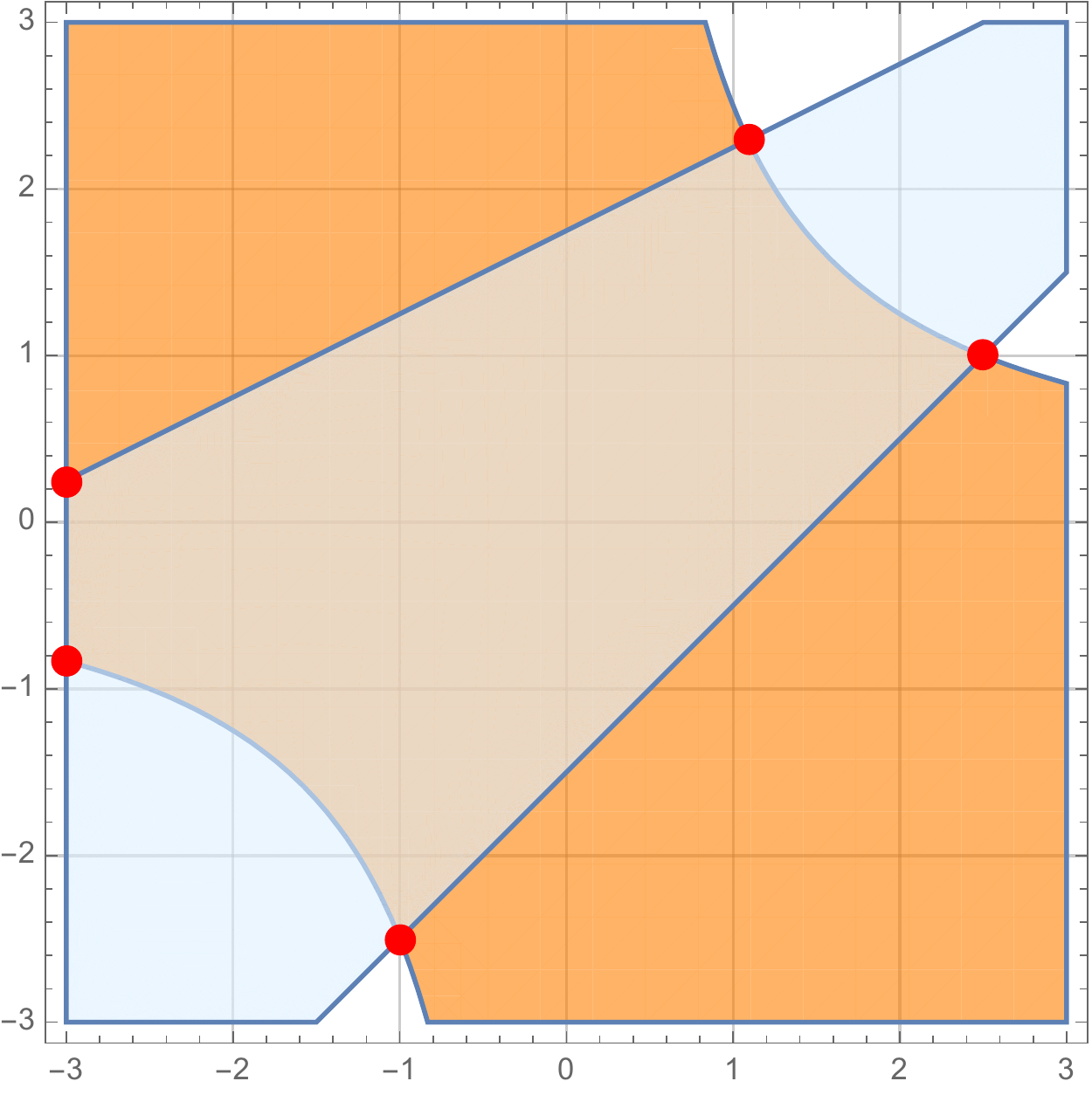}
  \end{minipage}
  \caption{ An example that shows bound reductions on $\xi$ and $\xj$ by utilizing $\Pij$ and bounds
    $[\lb_{ij},\ub_{ij}]$ on $\Xij$. The left plot shows $\Pij$, the middle plot shows the points $(\xi,\xj)$ that
    satisfy $\xi \xj \le \ub_{ij}$, and the right plot the intersection of both sets. Optimizing in the unit directions
    of the intersection, i.e., minimizing and maximizing $\xi$ and $\xj$, is equivalent to optimizing over the red
    points. }
  \label{fig:reverseprop}
\end{figure}

\section{Computational experiments}
\label{section:experiments}

In this section, we present a computational study of the presented propagation and separation ideas for bilinear terms for publicly
available instances of the \minlplib~\cite{MINLPLIB}.
We conduct three experiments to answer the following questions:
\begin{enumerate}
  \item \expAffected{}: Since it is unclear whether and to what extend \MINLPs in practice allow for a nontrivial projection $\projrelaxset$,
  we first investigate empirically how many instances have a linear relaxation that provides inequalities of the form~\eqref{eq:projineq}
  that are not axis-parallel.
  \item \expRootgap{}: How much gap can be closed when using the stronger separation and propagation of bilinear terms
  only in the root node of a branch-and-bound tree with aggressive root separation settings?
  \item \expTree{}: How much do the presented techniques affect the solvability and performance of \MINLPs in spatial branch-and-bound? For
    this experiment, we discuss suitable working limits on the number of \LP iterations to solve the projections and investigate the
    performance impact of the stronger separation and propagation individually.
\end{enumerate}

Our ideas are embedded in the \MINLP solver \scip~\cite{SCIP}. We refer
to~\cite{Achterberg2007a,Vigerske2013,Vigerske2017} for an overview of the general solving algorithm and \MINLP features
of \scip.

\subsection{Experimental setup}

For the \expAffected{} and \expRootgap{} experiments, we disable the \LP~iteration limit of the \OBBT propagator, enable the aggressive
separation emphasis setting, and disable restarts.\footnote{
\revision{In a restart, \scip aborts the current search process and preprocesses the problem again. Per default, this only happens in
the root node when enough variable bound reductions could be found. We refer to~\cite[Section 10.9]{Achterberg2007a} for more details
about restarts.}}\footnote{\scip
  settings \texttt{propagating/obbt/itlimfactor = -1}, \texttt{limits/restart =
    0}, \texttt{limits/totalnodes = 1}, and
  \texttt{separation/emphasis/aggressive = TRUE}}
The choices for the parameters ensure that the root node has been completely processed and there are no
further reductions possible by applying \OBBT again. Afterward, we use the current linear relaxation to compute the
two-dimensional projections $\Pij$ as described in Algorithm~\ref{algo:projections}. The projections are then used to
strengthen the separation and propagation of constraints of the form $\Xij = \xi \xj$.

In contrast to the first two experiments, the \expTree{} experiment is based on default settings.
The projections are utilized at every node of the
branch-and-bound tree. Note that the convex hull of the graph of $\xi \xj$ on $\Pij$ is in general not polyhedral. To
prevent a potential slowdown caused by too many separation rounds, at local nodes of the branch-and-bound tree, i.e., not at the root node, we use the inequalities only twice for separation. Additionally, we use a limit on the total number of
\LP iterations in order to bound to computational cost of solving~\eqref{eq:diaglp}. Similarly to Gleixner et
al.~\cite{Gleixner2017}, a limit of three times the \LP iterations that are spent so far at the root node is imposed.

For the \expAffected{} and \expRootgap{} experiments, we use a time limit of 7200s and a memory limit of 30~GB to ensure that for each instance
the root node could be completely processed. For our \expTree{} experiment, all instances run with a time limit of 1800s, a
memory limit of 30~GB, and an optimality gap limit of $10^{-4}$ to reduce the impact of tailing-off effects.

\paragraph{Implementation}

We extended two existing plug-ins of \scip: the \OBBT propagator, which can now additionally compute the two-dimensional
projections for variables that appear in a bilinear term $\xi\xj$; and a so-called nonlinear handler
 that calls Locatelli's algorithm and the propagation techniques described in
Section~\ref{section:propagation} for each $\xi\xj$ individually. Bilinear terms that only appear in convex constraints
or contain binary variables are ignored in both steps.
To reduce side effects, we use a separate working limit for solving the \LPs~\eqref{eq:diaglp} after applying standard
\OBBT. \revision{This is similar to the structure of Algorithm~\ref{algo:projections}.}

\revision{Using \OBBT in a local node of the tree search results in a significant slowdown of \scip. For this reason, by default,
\scip applies \OBBT only in the root node of the branch-and-bound tree.}

\paragraph{Test set}

We used the publicly available instances of the \minlplib~\cite{MINLPLIB}, which at time of the experiments
contained \ninstances{} instances.
This includes among others instances from the first \minlplib, the nonlinear programming library \globallib, and the
CMU-IBM initiative \href{www.minlp.org}{minlp.org}~\cite{MINLPDOTORG}.
We selected the instances that were available in OSiL format and consisted of nonlinear expressions that could be
handled by \scip: 1671~instances.

\paragraph{Hardware and software}

The experiments were performed on a cluster of 64bit Intel Xeon X5672~CPUs at 3.2\,GHz with 12\,MB cache and 48\,GB main
memory.
In order to safeguard against a potential mutual slowdown of parallel processes, we ran only one job per node at a time.
We used a development version of \scip that is based on version~6.0 with \cplex~12.8.0.0 as LP~solver~\cite{Cplex},
\cppad~20180000.0~\cite{CppAD}, and \ipopt~3.12.11 as \NLP solver~\cite{WachterBiegler2005,Ipopt} with
\mumps~4.10.0~\cite{Amestoy2001}.

\paragraph{Averages and statistical tests}

In order to evaluate algorithmic performance over a large set of benchmark instances, we compare geometric means, which
provide a measure for relative differences.
This avoids results being dominated by outliers with large absolute values as is the case for the arithmetic mean.
In order to also avoid an over-representation of differences among very small values, we use the shifted geometric mean.
The \emph{shifted geometric mean} of values $v_1,\ldots,v_N \geq 0$ with shift~$s \geq 0$ is defined as
\begin{equation*}
  \left(\prod_{i=1}^N (v_i + s)\right)^{1/N} - s.
\end{equation*}
See also the discussion in~\cite{Achterberg2007a,AchterbergWunderling2013,Hendel2014}. We use a shift value of $100$ for
\LP iterations and a value of one second for the solving time.

\subsection{Computational results}

In the following, we present results for the three above described experiments.

\paragraph{\expAffected{} experiment.}

In order to quantify how many instances are potentially affected by our ideas, we use the number of bilinear terms for which
a useful two-dimensional projection could be found after processing the root node. We prioritize bilinear terms that appear in
multiple quadratic constraints. In our analysis this is captured by taking the occurrence of a bilinear term in the
original \MIQCP~\eqref{eq:miqcp} into account. Denote by
\begin{equation*}
  K_{ij} := |\{k \in \consindex \mid (Q_k)_{ij} \neq 0\}|
\end{equation*}
the number of constraints in~\eqref{eq:miqcp} that contain $\xi\xj$. The value $\phi_{ij} \in \{0,1\}$ indicates whether
a useful projection could be found for $\xi\xj$ or not. Then
\begin{equation*}
  \Psi := \frac{\sum_{i,j} K_{ij} \phi_{ij}}{\sum_{i,j} K_{ij}} \in [0,1]
\end{equation*}
defines a measure for the \emph{effectiveness} of an \MIQCP. The interpretation of $K_{ij}$ in the definition of $\Psi$ is that
each bilinear term $\xi\xj$ is counted as a separate term of~\eqref{eq:miqcp}.

\begin{figure}[t]
  \centering
  \includegraphics[width=0.6\textwidth]{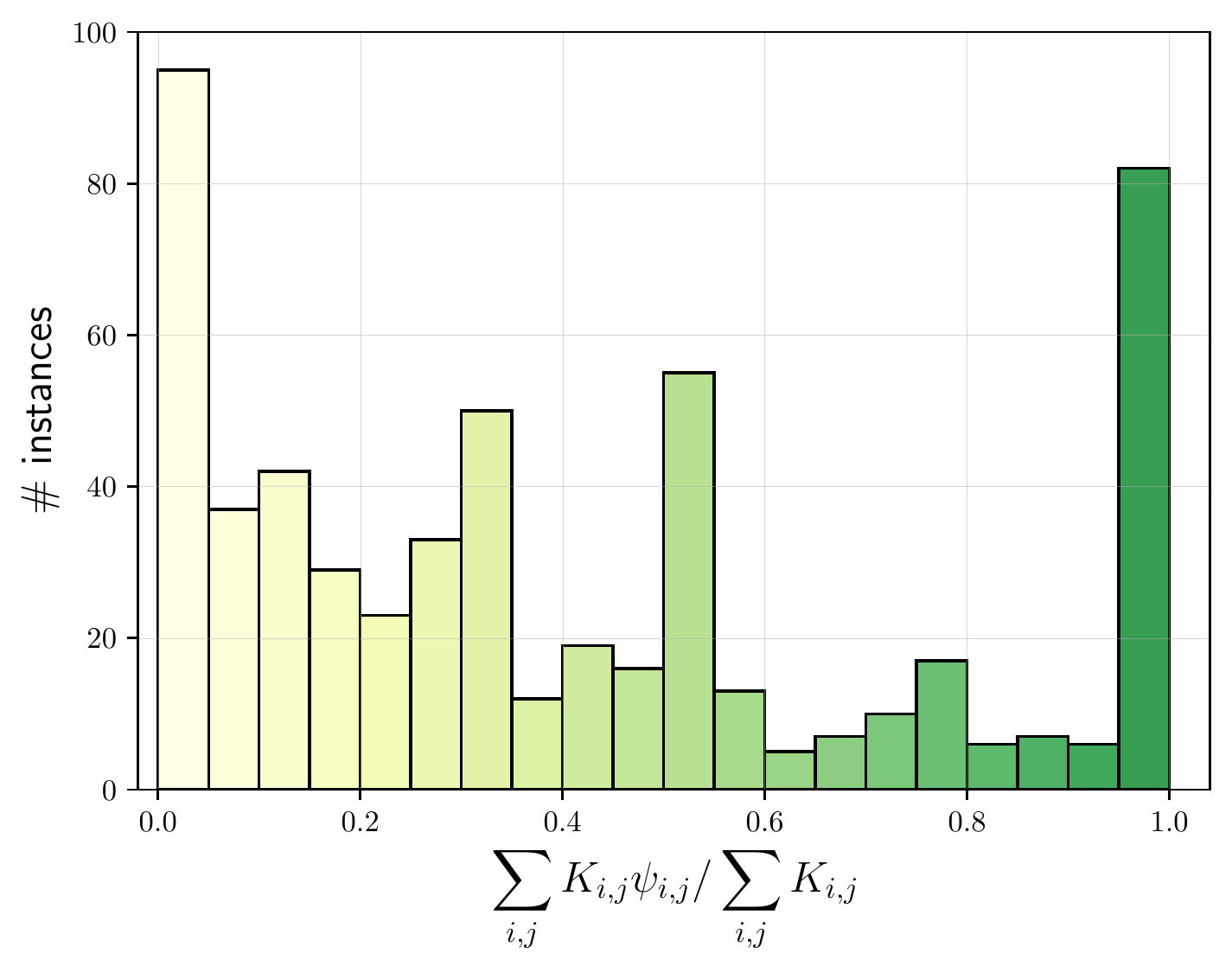}
  \caption{Instances of the \minlplib that contain at least one bilinear term $\xi\xj$ for which the two-dimensional
    projection is not equal to $\ijbox$. The y-axis displays the total number of instances and the x-axis an interval
    effectiveness $\Psi \in [0,1]$.}
  \label{fig:affected}
\end{figure}

Figure~\ref{fig:affected} shows the effectiveness on the instances of the \minlplib, where instances with $\Psi = 0$
are filtered out. Detailed results for all instances that contain at least one bilinear term are given in
Table~\ref{table:affected:detailed} of the electronic supplement. Out of the \ninstances{}, \affectedNNoBilins{} do not contain a bilinear term or
are solved before computing the two-dimensional projections. In total, \affectedNAffected{} instances provide a relevant
projection for at least one bilinear term, i.e., $\Psi > 0$. There are \affectedNAffectedZero{} instances with an
effectiveness between $0-5$\% and \affectedNAffectedFull{} instances with an effectiveness of $95-100$\%. The average
effectiveness among all instances is \affectedArithAll{} and \affectedArithAffected{} for the subset of instances that
have a strictly positive effectiveness.

Note that although we do not use an exact algorithm for computing the projection, we obtain the same number of relevant
instances because if no nontrivial facet was found then the box is the exact projection, i.e, $\ijbox = \projrelaxset$.

To analyze the computational cost of computing all projections, we use the total number of \LP iterations and the time
spent for solving all \LPs~\eqref{eq:diaglp}. Computing all projections takes on average \affectedGeomeanTimeAll{}
seconds and \affectedGeomeanItersAll{} \LP iterations. On instances that do not provide any useful projection, we
observe on average \affectedGeomeanItersNotffected{} \LP iterations and spend \affectedGeomeanTimeNotffected{} seconds
in computing the projections. This time can be considered to be a constant slow-down because we could not learn anything
for these instances which could pay off in the remaining solution process. For instances with a strict positive
effectiveness, we use on average \affectedGeomeanItersAffected{} \LP iterations and \affectedGeomeanTimeAffected{}
seconds.

We briefly report on the success of filtering candidates by exploiting previously computed \LP optima \revision{in
Line~\ref{algo:projections:filter1} and~\ref{algo:projections:filter2} of Algorithm~\ref{algo:projections}}. Out of all
\ninstances{} instances, we could filter candidates on \affectedFilteredInstances{} instances. On these instances, the
filtering rate is on average \affectedFilteredRateAllPercentage{}\% and \affectedFilteredRateFilteredPercentage{}\% on
the \affectedNAffected{} selected instances.

\revision{Last, we report on the impact of finding nontrivial inequalities when applying Algorithm~\ref{algo:projections}
  multiple times in the root node. As discussed in Section~\ref{section:projections:computing}, tighter projections could be
  found when refining $\relaxset$ after calling \OBBT. Indeed, we observed a slight improvement in the success when
  recomputing the projections. The first bar of Figure~\ref{fig:affected} decreases from $\affectedNAffectedZero{}$ to $87$, which
  means that for $10$ more instances a relevant projection could be found that could not be found before. The
  average effectiveness improves from $\affectedArithAllPercent$\% to $19.2$\% on all instances, and improves from
  $\affectedArithAffectedPercent$\% to $41.0$\% on the affected instances.}

\revision{Even though there is a slight improvement in the success when recomputing the projections in the root node, we observed
  that the tighter projections have almost no impact on the dual bounds of the \expRootgap{} experiment. Due to the fact that
  recomputing the projections can be expensive, we only use Algorithm~\ref{algo:projections} once in the root node.}

\paragraph{\expRootgap{} experiment}

Aggregated results for the \expRootgap{} experiment are shown in Table~\ref{table:gapclosed} and visualized in
Figure~\ref{fig:gapclosed}. We refer to Table~\ref{table:root:detailed} in the electronic supplement for detailed, instance-wise results.

From the potentially \affectedNAffected{} affected instances of the previous experiment, we filtered out all instances
that have been detected to be infeasible, no primal solution is known, or we could not prove any finite dual bound with
the above described settings. This leaves \rootNinstances{} instances. Let $I := \{1,\ldots,\rootNinstances{}\}$ be the
index set of these instances.

\begin{definition}\label{definition:gapclosed}\normalfont
  Let $p \in \R$ be a valid primal bound and $d_1 \in \R$ and $d_2 \in \R$ be two dual bounds for~\eqref{eq:miqcp},
  i.e., $d_1 \le p$ and $d_2 \le p$. The function $\gapfct : \R^3 \rightarrow [-1,1]$ with
  \begin{equation*}
    \gapfct(p,d_1,d_2) :=
    \begin{cases}
      0, & \text{ if } d_1 = d_2 \\
      +1 - \frac{p - d_1}{p - d_2}, & \text{ if } d_1 > d_2 \\
      -1 + \frac{p - d_2}{p - d_1}, & \text{ if } d_1 < d_2
    \end{cases}
  \end{equation*}
  measures the \emph{gap closed} improvement when comparing the distance of $d_1$ and $d_2$ to $p$.
\end{definition}

Denote by $d_1^i$ and $d_2^i$ the dual bounds of instance $i \in I$ obtained with and without using the two-dimensional
projections for separation and propagation. A reference primal bound $p^i$ is given by the best known bound for
$i \in I$ in the \minlplib. We use the gap-closed values for comparing the bounds $d_1^i$ and $d_2^i$ with respect to
$p^i$. Note that $\gapfct(d_1^i,d_2^i,p^i) = 1$ implies $d_1^i = p^i$ and $d_2^i < d_1^i$, which means that the instance
could be solved in the root node to optimality when using the two-dimensional
projections, \revision{but could not be solved to optimality in the root node without them.}

Table~\ref{table:gapclosed} shows that using the projections for separation and propagation has a significant impact on
the quality of the achieved dual bounds in the root node. On all \rootNinstances{} instances, the average gap closed
improvement is \rootGapClosedAllPercent{}\%. The average improvement is \rootGapClosedAffectedPercent{}\% on
\rootGapClosedAffectedNInstances{} instances for which the gap closed values differ by at least 1\%. Considering the
affected instances with a minimum improvement or deterioration of 1\% reveals that the dual bounds
improve on \rootGapClosedBetterNInstances{} and only get worse on \rootGapClosedWorseNInstances{} instances. The average gap
improvement is \rootGapClosedBetterPercent{}\% on the \rootGapClosedBetterNInstances{}~instances and
\rootGapClosedWorsePercent{}\% on the \rootGapClosedWorseNInstances{} instances.

\revision{Next, we briefly report on the three instances in Figure~\ref{fig:gapclosed} that have a gap closed value less than~$-80$\%.
  Those instances are \texttt{crudeoil\_lee4\_05}, \texttt{crudeoil\_lee4\_06}, and \texttt{nuclear25b}. The dual bounds obtained
  for both \texttt{crudeoil} instances are $d_1 = 132.585$ and $d_2 = 132.548$,
  and the dual bounds for \texttt{nuclear25b} are $d_1 = -1.74673$ and $d_2 = -1.2208$.
  The primal bounds are $132.548$ for both \texttt{crudeoil} instances and $-1.1136$ for \texttt{nuclear25b}
  None of the three
  instances run into the time limit, which means that the differences in the dual bounds are caused by side effects or internal working limits
  in \scip. Interestingly, it can be observed that \scip applies $1.5$ to $2$ times more cutting planes when deactivating our developed methods for
  those three instances. However, we could not observe that the performance degradation is causally related to the new methods.}

\begin{figure}[t]
  \centering
  \includegraphics[width=0.6\textwidth]{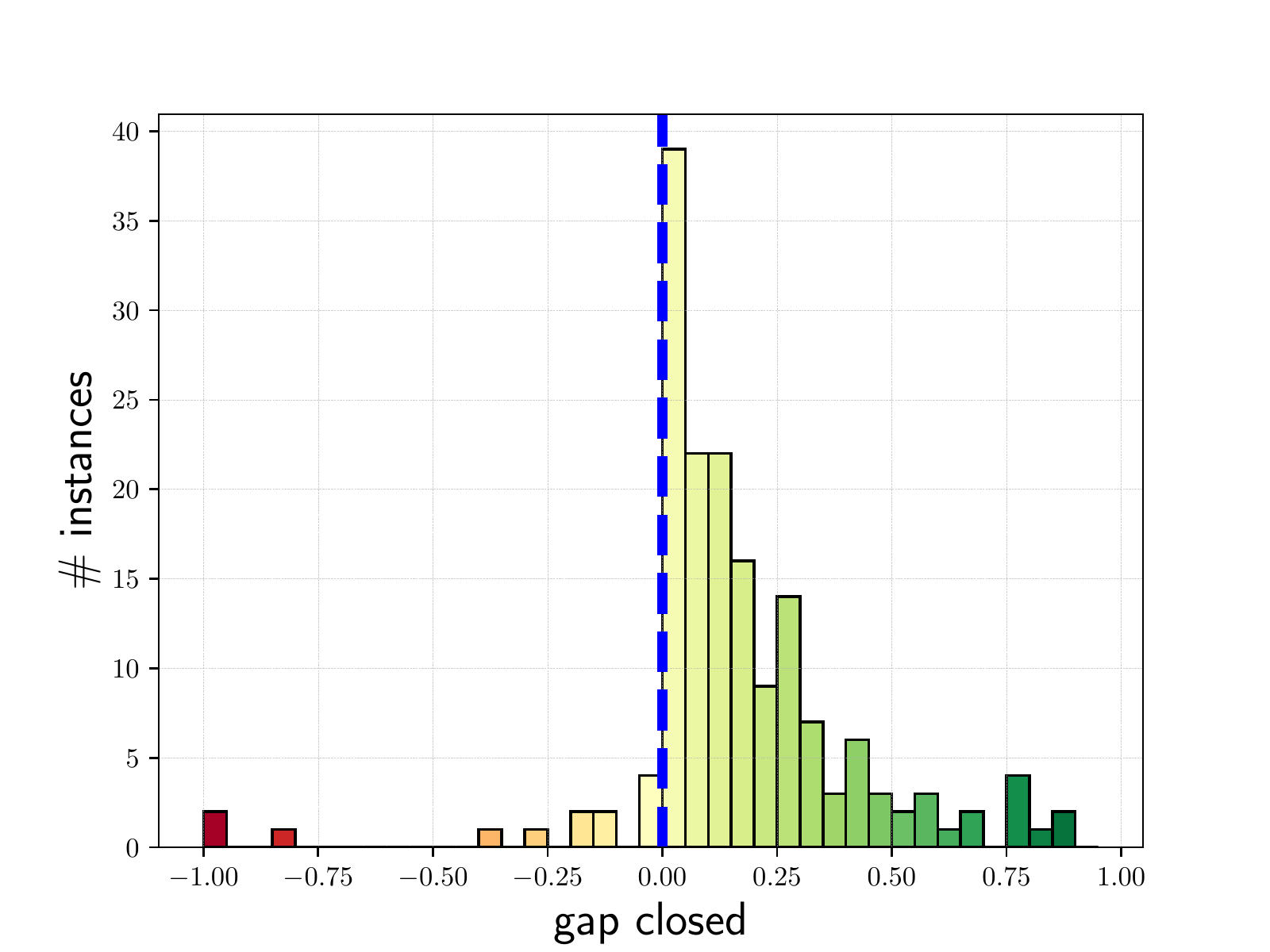}
  \caption{A bar diagram that visualizes the gap closed improvements for the \rootNinstances{} selected instances. Each
    bar maps to an interval with width 0.05 that corresponds to the gap closed improvement. The height of the bar
    displays the total number of instances that achieved a value in the corresponding interval.}
  \label{fig:gapclosed}
\end{figure}

\begin{table}[t]
  \centering
  \begin{tabular*}{0.5\textwidth}{@{\extracolsep{\fill}}lrr}
    \toprule
    & \# instances & gap closed \\
    \midrule
    \texttt{ALL}         & \rootNinstances{}                  & \rootGapClosedAllPercent{}\% \\ \midrule
    \texttt{>1\% change} & \rootGapClosedAffectedNInstances{} & \rootGapClosedAffectedPercent{}\% \\
    \texttt{>1\% better} & \rootGapClosedBetterNInstances{}   & \rootGapClosedBetterPercent{}\% \\
    \texttt{>1\% worse} &  \rootGapClosedWorseNInstances{}    & \rootGapClosedWorsePercent{}\% \\
    \bottomrule
  \end{tabular*}
  \caption{Aggregated results for the \expRootgap{} experiments. \revision{The table shows the average gap closed values for
    different subsets of instances.} \texttt{ALL} contains all instances, \texttt{>1\% better} instances that improved by
    at least one percent, \texttt{>1\% worse} instances that deteriorate by at least one percent, and \texttt{>1\%
      change} is the union of instances in \texttt{>1\% better} and \texttt{>1\% worse}.}
  \label{table:gapclosed}
\end{table}

\paragraph{\expTree{} experiment}

For the \expTree{} experiment, we use five permutations for each of the $\treeNInstancesAll{}$ instances per setting in order to robustify
the results against performance variability~\cite{KochEtAl2011,Lodi2013}. \revision{A permutation of an instance randomly changes the order of the variables and the
constraints. This can have a large impact on the behavior and the performance of a \MINLP solver.} An instance is considered to be solved
by a setting if all permutations could be solved by this setting. Hence, if a setting solves more instances it means that it could consistently
solve more instances over all permutations. For comparing solving times between different settings, we use the shifted geometric mean with a
shift value of one second for the five permutations of an instance and then consider the shifted geometric mean of all these values.

Aggregated results for the tree experiments are shown in Table~\ref{table:treeresults} and more detailed results for
each instance are contained in Table~\ref{table:tree:detailed} of the electronic supplement. \scip with its default settings solves
\treeNSolvedAllOff{} of the \treeNInstancesAll{} instances. When activating the use of projections for separation $3$ more
instances are solved than with default \scip; when activating it for both separation and propagation $5$ more instances are solved. Considering the total time, we
see that on average \treeScipSepa{} and \treeScipSepaProp{} is $3\%$ faster than \treeScip{}. The groups
\texttt{[1,tlim]}, \texttt{[10,tlim]}, and \texttt{[100,tlim]} are the subsets of instances for which at least one
setting solved the instance in \revision{more than} one, ten, or $100$ seconds, respectively.
These subsets form a hierarchy of increasingly difficult instance sets in an unbiased manner.
Compared to \treeScip{}, \treeScipSepaProp{}
solves $4$ more instances on \texttt{[1,tlim]}, $4$ more on \texttt{[10,tlim]}, and $5$ more on
\texttt{[100,tlim]}. With respect to time, \treeScipSepaProp{} is $11\%$ faster on \texttt{[1,tlim]}, $18\%$ on
\texttt{[10,tlim]}, and even $36\%$ on \texttt{[100,tlim]} than \treeScip{}.

A comparison of the second and the third column of Table~\ref{table:treeresults} shows that both the separation and the
propagation contribute to the larger number of solved instances. While activating separation alone does not improve the solving time, it can be seen that, more importantly, it does help to solve
more instances in total.

\begin{table}[t]
  \centering
  \begin{tabular*}{0.9\textwidth}{@{\extracolsep{\fill}}lrrrrrr}
    \toprule
    & & \multicolumn{1}{c}{\treeScipSepaProp} & \multicolumn{2}{c}{\treeScipSepa} & \multicolumn{2}{c}{\treeScip} \\
    \cmidrule[0.5pt]{3-3} \cmidrule[0.5pt]{4-5} \cmidrule[0.5pt]{6-7}
    & n & \# solved & \# solved & time & \# solved & time \\
    \midrule
    \texttt{ALL}        & \treeNInstancesAll{}     & \treeNSolvedAllDefault{}     & \treeNSolvedAllNoprop{}     & $\treeTimeAllNoprop{}$     & \treeNSolvedAllOff{}     & $\treeTimeAllOff{}$ \\ \midrule
    \texttt{[1,tlim]}   & \treeNInstancesOne{}     & \treeNSolvedOneDefault{}     & \treeNSolvedOneNoprop{}     & $\treeTimeOneNoprop{}$     & \treeNSolvedOneOff{}     & $\treeTimeOneOff{}$ \\
    \texttt{[10,tlim]}  & \treeNInstancesTen{}     & \treeNSolvedTenDefault{}     & \treeNSolvedTenNoprop{}     & $\treeTimeTenNoprop{}$     & \treeNSolvedTenOff{}     & $\treeTimeTenOff{}$ \\
    \texttt{[100,tlim]} & \treeNInstancesHundred{} & \treeNSolvedHundredDefault{} & \treeNSolvedHundredNoprop{} & $\treeTimeHundredNoprop{}$ & \treeNSolvedHundredOff{} & $\treeTimeHundredOff{}$ \\
    \bottomrule
  \end{tabular*}
  \caption{Aggregated results of \scip on the \affectedNAffected{} potentially affected instances of the
    \minlplib. For each instance, five permutation including the default permutation have been solved. An instance is
    considered to be solved when all permutations of this could be solved by a setting. Three different settings for
    \scip are used: default settings \treeScip{}, \treeScipSepa{} for activating separation, and \treeScipSepaProp{} for
    activating separation and propagation for bilinear terms that provide a useful two-dimensional projection. \revision{The column
    ``time'' reports the change of solving time relative to \treeScipSepaProp{}.}}
  \label{table:treeresults}
\end{table}

\section{Conclusion}
\label{section:conclusion}

In this article, we presented techniques to improve the separation and propagation of bilinear terms when solving \MINLPs
with spatial branch-and-bound and gave an extensive computational study on a large heterogeneous test set. Our ideas
are based on projecting a linear relaxation onto two variables that appear bilinearly by solving a sequence of \LPs that
are similar to the ones in \OBBT. Instead of computing the full projection, we compute a relaxation of the projection
that is described by few inequalities. By applying known polyhedral results, we are able to strengthen the separation of
quadratic constraints by computing the convex and concave envelope of $\xi\xj$ on the two-dimensional
projections. Additionally, we presented that the projections also enables us to tighten variable bounds. Computing the best
possible bounds of $\xi$, $\xj$, and $\xi\xj$ on the projection is in general a nonconvex optimization problem. We
proved that these problems can be efficiently solved by computing a discrete set of points. This allows us to
efficiently solve these optimization problems at every node of the branch-and-bound tree.

Our experiments on the publicly available instances of the \minlplib based on an implementation in the \MINLP solver
\scip show that \affectedNAffected{} of the \ninstances{} instances provide nontrivial projections for at least
one bilinear term. On these instances, it was possible to compute useful projections for
\affectedArithAffectedPercent{}\% of all bilinear terms. When using the projection exhaustively during the separation of the
root node, we observed an improvement of the achieved dual bounds on \rootGapClosedBetterNInstances{} and a deterioration on
only \rootGapClosedWorseNInstances{} instances. The average gap closed improvement on all instances for which a change
of at least one percent could be observed is \rootGapClosedAffectedPercent{}\%.
Finally, our tree experiments showed that the new techniques improve performance by $36$\% on difficult instances
and enable us to consistently solve more instances.

There are two interesting extensions of the presented methods. First, our propagation techniques do not only apply to
polyhedral projections but also for general two-dimensional convex sets. \revision{How to compute} these convex sets efficiently by
using a convex relaxation of a \MINLP remains an open question.
Second, for models that contain symmetric structures the tightness of the
two-dimensional projections and the performance improvements gained might profit
particularly from symmetry-breaking constraints of the form $\xi \le \xj$. These
inequalities are in general not implied by a linear relaxation, but can be
derived by considering formulation symmetry~\cite{Leo2011}.
 
\section*{Acknowledgments}
This work has received support from the Federal Ministry of Education and
Research (BMBF Grant~05M14ZAM, Research Campus MODAL) and from the Federal
Ministry for Economic Affairs and Energy (BMWi grant 03ET1549D, project EnBA-M).
All responsibilty for the content of this publication is assumed by the authors.
The authors thank the Schloss Dagstuhl – Leibniz Center for Informatics for hosting
the Seminar 18081 "Designing and Implementing Algorithms for Mixed-Integer Nonlinear Optimization"
for providing the environment to develop the ideas in this paper.
 
\bibliographystyle{spmpsci}
\bibliography{bilin}

\providecommand{\noopsort}[1]{}\providecommand{\arxivlink}[1]{\href{http://arxiv.org/abs/#1}{\nolinkurl{arXiv:#1}}}\providecommand{\doilink}[1]{\href{http://dx.doi.org/#1}{\nolinkurl{DOI:#1}}}\providecommand{\urnlink}[1]{\href{http://nbn-resolving.de/urn:#1}{\nolinkurl{URN:#1}}}\providecommand{\urllink}[1]{\texttt{}\url{#1}}
\begin{thebibliography}{10}
\providecommand{\url}[1]{{#1}}
\providecommand{\urlprefix}{URL }
\expandafter\ifx\csname urlstyle\endcsname\relax
  \providecommand{\doi}[1]{DOI~\discretionary{}{}{}#1}\else
  \providecommand{\doi}{DOI~\discretionary{}{}{}\begingroup
  \urlstyle{rm}\Url}\fi

\bibitem{Achterberg2007a}
Achterberg, T.: Constraint integer programming.
\newblock Ph.D. thesis, Technische Universit{\"a}t Berlin (2007).
\newblock \urlprefix\url{https://doi.org/10.14279/depositonce-1634}.
\newblock \urnlink{nbn:de:kobv:83-opus-16117}

\bibitem{AchterbergWunderling2013}
Achterberg, T., Wunderling, R.: Mixed integer programming: Analyzing 12 years
  of progress.
\newblock In: Facets of Combinatorial Optimization, pp. 449--481. Springer
  Berlin Heidelberg (2013).
\newblock \urlprefix\url{https://doi.org/10.1007%2F978-3-642-38189-8_18}

\bibitem{Khayyal1983}
Al-Khayyal, F.A., Falk, J.E.: Jointly constrained biconvex programming.
\newblock Mathematics of Operations Research \textbf{8}(2), 273--286 (1983).
\newblock \urlprefix\url{https://doi.org/10.1287%2Fmoor.8.2.273}

\bibitem{Amestoy2001}
Amestoy, P.R., Duff, I.S., L'Excellent, J.Y., Koster, J.: A fully asynchronous
  multifrontal solver using distributed dynamic scheduling.
\newblock {SIAM} Journal on Matrix Analysis and Applications \textbf{23}(1),
  15--41 (2001).
\newblock \urlprefix\url{https://doi.org/10.1137%2Fs0895479899358194}

\bibitem{Anstreicher2009}
Anstreicher, K.M.: Semidefinite programming versus the
  reformulation-linearization technique for nonconvex quadratically constrained
  quadratic programming.
\newblock Journal of Global Optimization \textbf{43}(2-3), 471--484 (2008).
\newblock \urlprefix\url{https://doi.org/10.1007%2Fs10898-008-9372-0}

\bibitem{Anstreicher2012}
Anstreicher, K.M.: On convex relaxations for quadratically constrained
  quadratic programming.
\newblock Mathematical Programming \textbf{136}(2), 233--251 (2012).
\newblock \urlprefix\url{https://doi.org/10.1007%2Fs10107-012-0602-3}

\bibitem{ANTIGONE}
{ANTIGONE} -- {A}lgorithms for co{NT}inuous / {I}nteger {G}lobal {O}ptimization
  of {N}onlinear {E}quations.
\newblock \urllink{http://helios.princeton.edu/ANTIGONE}

\bibitem{Balas2005}
Balas, E.: Projection, lifting and extended formulation in integer and
  combinatorial optimization.
\newblock Annals of Operations Research \textbf{140}(1), 125--161 (2005).
\newblock \urlprefix\url{https://doi.org/10.1007%2Fs10479-005-3969-1}

\bibitem{Balas1993}
Balas, E., Ceria, S., Cornu{\'{e}}jols, G.: A lift-and-project cutting plane
  algorithm for mixed 0{\textendash}1 programs.
\newblock Mathematical Programming \textbf{58}(1-3), 295--324 (1993).
\newblock \urlprefix\url{https://doi.org/10.1007%2Fbf01581273}

\bibitem{Balas2002}
Balas, E., Perregaard, M.: Lift-and-project for mixed 0--1 programming: Recent
  progress.
\newblock Discrete Applied Mathematics - {DAM} \textbf{123}, 129--154 (2002).
\newblock \urlprefix\url{https://doi.org/10.1016/S0166-218X(01)00340-7}

\bibitem{Bao2011}
Bao, X., Sahinidis, N.V., Tawarmalani, M.: Semidefinite relaxations for
  quadratically constrained quadratic programming: A review and comparisons.
\newblock Mathematical Programming \textbf{129}(1), 129--157 (2011).
\newblock \urlprefix\url{https://doi.org/10.1007%2Fs10107-011-0462-2}

\bibitem{BelottiCafieriLeeLiberti2012TR}
Belotti, P., Cafieri, S., Lee, J., Liberti, L.: On feasibility based bounds
  tightening.
\newblock Tech. Rep. 3325, Optimization Online (2012).
\newblock
  \urllink{http://www.optimization-online.org/DB_HTML/2012/01/3325.html}

\bibitem{Belotti2009}
Belotti, P., Lee, J., Liberti, L., Margot, F., Wächter, A.: Branching and
  bounds tightening techniques for non-convex {MINLP}.
\newblock Optimization Methods and Software \textbf{24}(4-5), 597--634 (2009).
\newblock \urlprefix\url{https://doi.org/10.1080%2F10556780903087124}

\bibitem{Ben2001}
Ben-Tal, A., Nemirovski, A.: On polyhedral approximations of the second-order
  cone.
\newblock Mathematics of Operations Research \textbf{26}(2), 193--205 (2001).
\newblock \urlprefix\url{https://doi.org/10.1287%2Fmoor.26.2.193.10561}

\bibitem{Boyd2004}
Boyd, S., Vandenberghe, L.: Convex optimization.
\newblock Cambridge university press (2004)

\bibitem{MINLPDOTORG}
{CMU-IBM} {C}yber-{I}nfrastructure for {MINLP}.
\newblock \urllink{http://www.minlp.org/}

\bibitem{Couenne}
{COIN-OR}: {C}ouenne, an exact solver for nonconvex {MINLP}s.
\newblock \urllink{http://www.coin-or.org/Couenne}

\bibitem{Ipopt}
COIN-OR: {I}popt, {I}nterior point optimizer.
\newblock \urllink{http://www.coin-or.org/Ipopt} (2018)

\bibitem{CppAD}
{COIN-OR}: {CppAD}, a package for differentiation of \cpp~algorithms.
\newblock \urllink{http://www.coin-or.org/CppAD} (2019).
\newblock Accessed in December 2019

\bibitem{Fujie1997}
Fujie, T., Kojima, M.: Semidefinite programming relaxation for nonconvex
  quadratic programs.
\newblock Journal of Global Optimization \textbf{10}(4), 367--380 (1997).
\newblock \urlprefix\url{https://doi.org/10.1023%2Fa%3A1008282830093}

\bibitem{Gleixner2017}
Gleixner, A., Berthold, T., Müller, B., Weltge, S.: Three enhancements for
  optimization-based bound tightening.
\newblock Journal of Global Optimization \textbf{67}(4), 731--757 (2016).
\newblock \urlprefix\url{https://doi.org/10.1007%2Fs10898-016-0450-4}

\bibitem{GleixnerWeltge2013}
Gleixner, A., Weltge, S.: Learning and propagating lagrangian variable bounds
  for mixed-integer nonlinear programming.
\newblock In: Integration of {AI} and {OR} Techniques in Constraint Programming
  for Combinatorial Optimization Problems, pp. 355--361. Springer Berlin
  Heidelberg (2013).
\newblock \urlprefix\url{https://doi.org/10.1007%2F978-3-642-38171-3_26}

\bibitem{GrossmanSahinidis2002}
Grossmann, I.E., Sahinidis, N.V.: Special issue on mixed integer programming
  and its application to engineering, part {I}.
\newblock Optimization and Engineering \textbf{3}(4) (2002)

\bibitem{Helmberg2000}
Helmberg, C., Rendl, F., Weismantel, R.: A semidefinite programming approach to
  the quadratic knapsack problem.
\newblock Journal of Combinatorial Optimization \textbf{4}(2), 197--215 (2000).
\newblock \urlprefix\url{https://doi.org/10.1023%2Fa%3A1009898604624}

\bibitem{Hendel2014}
Hendel, G.: Empirical analysis of solving phases in mixed integer programming.
\newblock Master's thesis, Technische Universit{\"a}t Berlin (2014).
\newblock \urnlink{nbn:de:0297-zib-54270}

\bibitem{Hijazi2019}
Hijazi, H.: Perspective envelopes for bilinear functions.
\newblock In: {AIP} Conference Proceedings (2019).
\newblock \urlprefix\url{https://doi.org/10.1063%2F1.5089984}

\bibitem{Hunsaker2008}
Hunsaker, B., Johnson, E.L., Tovey, C.A.: Polarity and the complexity of the
  shooting experiment.
\newblock Discrete Optimization \textbf{5}(2), 541--549 (2008).
\newblock \urlprefix\url{https://doi.org/10.1016%2Fj.disopt.2006.12.001}

\bibitem{Cplex}
{ILOG Inc.}: {CPLEX}: High-performance software for mathematical programming
  and optimization.
\newblock \urllink{http://www.ilog.com/products/cplex/} (2019).
\newblock Accessed in December 2019

\bibitem{Karush2013}
Karush, W.: Minima of functions of several variables with inequalities as side
  conditions.
\newblock In: Traces and Emergence of Nonlinear Programming, pp. 217--245.
  Springer Basel (2013).
\newblock \urlprefix\url{https://doi.org/10.1007%2F978-3-0348-0439-4_10}

\bibitem{KochEtAl2011}
Koch, T., Achterberg, T., Andersen, E., Bastert, O., Berthold, T., Bixby, R.E.,
  Danna, E., Gamrath, G., Gleixner, A., Heinz, S., Lodi, A., Mittelmann, H.,
  Ralphs, T., Salvagnin, D., Steffy, D.E., Wolter, K.: {MIPLIB} 2010.
\newblock Mathematical Programming Computation \textbf{3}(2), 103--163 (2011).
\newblock \urlprefix\url{https://doi.org/10.1007%2Fs12532-011-0025-9}

\bibitem{Kuhn2013}
Kuhn, H.W., Tucker, A.W.: Nonlinear programming.
\newblock In: Traces and Emergence of Nonlinear Programming, pp. 247--258.
  Springer Basel (2013).
\newblock \urlprefix\url{https://doi.org/10.1007%2F978-3-0348-0439-4_11}

\bibitem{Lee1994}
Lee, J., Morris, W.D.: Geometric comparison of combinatorial polytopes.
\newblock Discrete Applied Mathematics \textbf{55}(2), 163--182 (1994).
\newblock \urlprefix\url{https://doi.org/10.1016%2F0166-218x%2894%2990006-x}

\bibitem{Lemarechal2001}
Lemar{\'{e}}chal, C., Oustry, F.: {SDP} Relaxations in Combinatorial
  Optimization from a Lagrangian Viewpoint, pp. 119--134.
\newblock Springer {US} (2001).
\newblock \urlprefix\url{https://doi.org/10.1007%2F978-1-4613-0279-7_6}

\bibitem{Leo2011}
Liberti, L.: Symmetry in mathematical programming.
\newblock In: Mixed Integer Nonlinear Programming, pp. 263--283. Springer New
  York (2011).
\newblock \urlprefix\url{https://doi.org/10.1007%2F978-1-4614-1927-3_9}

\bibitem{Linderoth2005}
Linderoth, J.: A simplicial branch-and-bound algorithm for solving
  quadratically constrained quadratic programs.
\newblock Mathematical Programming \textbf{103}(2), 251--282 (2005).
\newblock \urlprefix\url{https://doi.org/10.1007%2Fs10107-005-0582-7}

\bibitem{Locatelli2018}
Locatelli, M.: Convex envelopes of bivariate functions through the solution of
  {KKT} systems.
\newblock Journal of Global Optimization \textbf{72}(2), 277--303 (2018).
\newblock \urlprefix\url{https://doi.org/10.1007%2Fs10898-018-0626-1}

\bibitem{Lodi2013}
Lodi, A., Tramontani, A.: Performance variability in mixed-integer programming.
\newblock In: Theory Driven by Influential Applications, pp. 1--12. {INFORMS}
  (2013).
\newblock \urlprefix\url{https://doi.org/10.1287%2Feduc.2013.0112}

\bibitem{Luo2010}
{Luo}, Z., {Ma}, W., {So}, A.M., {Ye}, Y., {Zhang}, S.: Semidefinite relaxation
  of quadratic optimization problems.
\newblock {IEEE} Signal Processing Magazine \textbf{27}(3), 20--34 (2010).
\newblock \urlprefix\url{https://doi.org/10.1109%2Fmsp.2010.936019}

\bibitem{McCormick1976}
McCormick, G.P.: Computability of global solutions to factorable nonconvex
  programs: Part i {\textemdash} convex underestimating problems.
\newblock Mathematical Programming \textbf{10}(1), 147--175 (1976).
\newblock \urlprefix\url{https://doi.org/10.1007%2Fbf01580665}

\bibitem{Meyer2006}
Meyer, C.A., Floudas, C.A.: Global optimization of a combinatorially complex
  generalized pooling problem.
\newblock {AIChE} Journal \textbf{52}(3), 1027--1037 (2006).
\newblock \urlprefix\url{https://doi.org/10.1002%2Faic.10717}

\bibitem{Miller2011}
Miller, A.J., Belotti, P., Namazifar, M.: Linear inequalities for bounded
  products of variables.
\newblock {SIAG}/{OPT} Views and News \textbf{22}(1), 1--8 (2011).
\newblock \urlprefix\url{http://wiki.siam.org/siag-op/index.php/View_and_News}

\bibitem{MINLPLIB}
{MINLP} library.
\newblock \urllink{http://www.minlplib.org/}

\bibitem{Nazareth1991}
Nazareth, J.L.: The homotopy principle and algorithms for linear programming.
\newblock {SIAM} Journal on Optimization \textbf{1}(3), 316--332 (1991).
\newblock \urlprefix\url{https://doi.org/10.1137%2F0801021}

\bibitem{Poljak1995}
Poljak, S., Rendl, F., Wolkowicz, H.: A recipe for semidefinite relaxation for
  (0,1)-quadratic programming.
\newblock Journal of Global Optimization \textbf{7}(1), 51--73 (1995).
\newblock \urlprefix\url{https://doi.org/10.1007%2Fbf01100205}

\bibitem{Puranik2017}
Puranik, Y., Sahinidis, N.V.: Domain reduction techniques for global {NLP} and
  {MINLP} optimization.
\newblock Constraints \textbf{22}(3), 338--376 (2017).
\newblock \urlprefix\url{https://doi.org/10.1007%2Fs10601-016-9267-5}

\bibitem{QuesadaGrossmann1995}
Quesada, I., Grossmann, I.E.: A global optimization algorithm for linear
  fractional and bilinear programs.
\newblock Journal of Global Optimization \textbf{6}(1), 39--76 (1995).
\newblock \urlprefix\url{https://doi.org/10.1007%2Fbf01106605}

\bibitem{Ryoo1995}
Ryoo, H., Sahinidis, N.: Global optimization of nonconvex {NLPs} and {MINLPs}
  with applications in process design.
\newblock Computers {\&} Chemical Engineering \textbf{19}(5), 551--566 (1995).
\newblock \urlprefix\url{https://doi.org/10.1016%2F0098-1354%2894%2900097-2}

\bibitem{Sahinidis2017}
Sahinidis, N.V.: {BARON 17.8.9: Global Optimization of Mixed-Integer Nonlinear
  Programs, {\em User's Manual}} (2017).
\newblock {Available at
  \url{http://www.minlp.com/downloads/docs/baron%20manual.pdf}}

\bibitem{SCIP}
{SCIP} -- {S}olving {C}onstraint {I}nteger {P}rograms.
\newblock \urllink{http://scip.zib.de}

\bibitem{Serra2018}
Serra, T.: Essays on postoptimality, lift-and-project, and scheduling.
\newblock Ph.D. thesis, Carnegie Mellon Tepper (2018).
\newblock \urlprefix\url{https://doi.org/10.1184/R1/6716444.v1}

\bibitem{Sherali1999}
Sherali, H.D., Adams, W.P.: A Reformulation-Linearization Technique for Solving
  Discrete and Continuous Nonconvex Problems.
\newblock Springer {US} (1999).
\newblock \urlprefix\url{https://doi.org/10.1007%2F978-1-4757-4388-3}

\bibitem{SheraliFraticelli2002}
Sherali, H.D., Fraticelli, B.M.P.: Enhancing {RLT} relaxations via a new class
  of semidefinite cuts.
\newblock Journal of Global Optimization \textbf{22}(1/4), 233--261 (2002).
\newblock \urlprefix\url{https://doi.org/10.1023%2Fa%3A1013819515732}

\bibitem{Smith1997}
Smith, E.M., Pantelides, C.C.: Global optimisation of nonconvex {MINLPs}.
\newblock Computers {\&} Chemical Engineering \textbf{21}, S791--S796 (1997).
\newblock \urlprefix\url{https://doi.org/10.1016%2Fs0098-1354%2897%2987599-0}

\bibitem{Speakman2017}
Speakman, E., Lee, J.: Quantifying double {McCormick}.
\newblock Mathematics of Operations Research \textbf{42}(4), 1230--1253 (2017).
\newblock \urlprefix\url{https://doi.org/10.1287%2Fmoor.2017.0846}

\bibitem{Vandenberghe1996}
Vandenberghe, L., Boyd, S.: Semidefinite programming.
\newblock {SIAM} Review \textbf{38}(1), 49--95 (1996).
\newblock \urlprefix\url{https://doi.org/10.1137%2F1038003}

\bibitem{Vigerske2013}
Vigerske, S.: Decomposition in multistage stochastic programming and a
  constraint integer programming approach to mixed-integer nonlinear
  programming.
\newblock Ph.D. thesis, Humboldt-Universit\"at zu Berlin,
  Mathematisch-Naturwissenschaftliche Fakult\"at II (2013).
\newblock \urnlink{nbn:de:kobv:11-100208240}

\bibitem{Vigerske2017}
Vigerske, S., Gleixner, A.: {SCIP}: global optimization of mixed-integer
  nonlinear programs in a branch-and-cut framework.
\newblock Optimization Methods and Software \textbf{33}(3), 563--593 (2017).
\newblock \urlprefix\url{https://doi.org/10.1080%2F10556788.2017.1335312}

\bibitem{WachterBiegler2005}
Wächter, A., Biegler, L.T.: On the implementation of an interior-point filter
  line-search algorithm for large-scale nonlinear programming.
\newblock Mathematical Programming \textbf{106}(1), 25--57 (2005).
\newblock \urlprefix\url{https://doi.org/10.1007%2Fs10107-004-0559-y}

\bibitem{Zamora1999}
Zamora, J.M., Grossmann, I.E.: A branch and contract algorithm for problems
  with concave univariate, bilinear and linear fractional terms.
\newblock Journal of Global Optimization \textbf{14}(3), 217--249 (1999).
\newblock \urlprefix\url{https://doi.org/10.1023%2Fa%3A1008312714792}

\end{thebibliography}

\newpage
\listoftodos[Notes]


 
\end{document}